\documentclass[conference,letter,onecolumn,10pt]{IEEEtran}

\usepackage{amsmath, amsthm, amssymb}
\usepackage{latexsym}
\usepackage{graphicx}
\usepackage{verbatim}
\usepackage{mathrsfs}
\usepackage{float}
\usepackage[lofdepth, lotdepth]{subfig}
\usepackage{array}
\usepackage{url}
\usepackage{cite}
\usepackage[table]{xcolor}
\usepackage{enumerate}
\usepackage{eucal}
\usepackage{stmaryrd}
\usepackage{mathtools}
\usepackage{pgf}
\usepackage{tikz}
\usetikzlibrary{arrows,automata}
\usepackage[latin1]{inputenc}
\usetikzlibrary{automata,positioning}

\usepackage[margin=1in]{geometry}

\usepackage{algorithm}
\usepackage[noend]{algpseudocode}


\theoremstyle{plain}
\newtheorem{theorem}{Theorem}
\newtheorem{proposition}[theorem]{Proposition}
\newtheorem{lemma}[theorem]{Lemma}

\newtheorem{corollary}[theorem]{Corollary}
\newtheorem{construction}{Construction}
\theoremstyle{definition}
\newtheorem{definition}{Definition}
\newtheorem{example}{Example}
\newtheorem{remark}{Remark}


\newcommand{\C}{{\mathcal C}}
\newcommand{\D}{{\mathcal D}}

\newcommand{\X}{{\mathcal X}}



\newcommand{\bL}{{\boldsymbol L}}

\newcommand{\ba}{{\boldsymbol u}}
\newcommand{\bb}{{\boldsymbol v}}
\newcommand{\bp}{{\boldsymbol p}}
\newcommand{\bq}{{\boldsymbol q}}
\newcommand{\bu}{{\boldsymbol u}}
\newcommand{\bv}{{\boldsymbol v}}

\newcommand{\by}{{\boldsymbol y}}
\newcommand{\bt}{{\boldsymbol t}}

\newcommand{\bw}{{\boldsymbol{w}}}

\newcommand{\br}{{\boldsymbol{r}}}
\newcommand{\bx}{{\boldsymbol{x}}}
\newcommand{\bz}{{\boldsymbol{z}}}













\renewcommand{\ge}{\geqslant}
\renewcommand{\le}{\leqslant}

\newcommand{\floor}[1]{\left\lfloor #1\right\rfloor}


\begin{document}

\title{~\\[30mm]
Deciding the Confusability of Words under Tandem Repeats}

\author{~\\[2mm]
   \IEEEauthorblockN{
	Yeow Meng Chee,
	Johan Chrisnata,
	Han Mao Kiah, and
	Tuan Thanh Nguyen}

   \IEEEauthorblockA{
   School of Physical and Mathematical Sciences,
	Nanyang Technological University, Singapore\\
email: $\{${ymchee}, {jchrisnata}, {hmkiah}, {nguyentu001}$\}$@ntu.edu.sg 
 }
}
\maketitle
\pagestyle{plain}
\vspace{30mm}

\begin{abstract}
Tandem duplication in DNA is the process of inserting a copy of a segment of DNA adjacent to the original position.
Motivated by applications that store data in living organisms, 
Jain {\em et al.} (2016) proposed the study of codes that correct tandem duplications to improve the reliability of data storage. 
We investigate algorithms associated with the study of these codes.

Two words are said to be ${\le}k$-confusable if 
there exists two sequences of tandem duplications of lengths at most $k$
such that the resulting words are equal.
We demonstrate that the problem of deciding whether two words is ${\le}k$-confusable is linear-time solvable 
through a characterisation that can be checked efficiently for $k=3$.
Combining with previous results, the decision problem is linear-time solvable for $k\le 3$. 
We conjecture that this problem is undecidable for $k>3$.

Using insights gained from the algorithm, 
we study the size of tandem-duplication codes.
We improve the previous known upper bound and 
then construct codes with larger sizes as compared to the previous constructions.
We determine the sizes of optimal tandem-duplication codes for lengths up to {twenty},
develop recursive methods to construct tandem-duplication codes for all word lengths, and 
compute explicit lower bounds for the size of optimal tandem-duplication codes for lengths {from 21 to 30}.
%

\end{abstract}


\newpage

\section{Introduction}
At the beginning of the millenium, Lander {\em et al.} \cite{Lander} published a draft sequence of the human genome  
and reported that more than 50\% of the human genome consists of repeated substrings \cite{Lander}. 
There are two types of common repeats: {\em interspersed} repeats and {\em tandem} repeats. 
Interspersed repeats are caused by transposons 
when a segment of DNA is copied and pasted into new positions of the genome.
In contrast, tandem repeats are caused by slipped-strand mispairings \cite{Mundy}, and 
they occur 
when a pattern of one or more nucleotides is repeated and the repetitions are 
adjacent to each other. For example, consider the word {\tt AGTAGTCTGC}. The substring {\tt AGTAGT} is a tandem repeat, and we say that 
{\tt AGTAGTCTGC} is generated from {\tt AGTCTGC} by a {\em tandem duplication} of length three. 
Tandem repeats are believed to be the cause of several genetic disorders \cite{Usdin,Sutherland,Fondon}
and this motivated the study of tandem duplication mechanisms in a variety of contexts.

\begin{itemize}
\item \textbf{Formal languages}: 
Leopold {\em et al.} \cite{Leupold1}, \cite{Leupold2} 
defined the {\em unbounded duplication language} and the {\em $k$-bounded duplication language} to be the
set of words generated by seed word under 
tandem duplications of unbounded length and  tandem duplications of length up to $k$, respectively.
They investigated certain decidability problems involving unbounded duplication languages and 
showed that all $k$-bounded duplication languages are context free. 
Furthermore, they showed that $k$-bounded duplication language is always regular for any binary seed and $k \ge 1$. 
On the other hand, the $k$-bounded duplication language is not regular 
for any \textit{square-free} seed word over an alphabet of at least three letters and $k \ge 4$. 
More recently, Jain {\em et al.} \cite{Jain2} completed this characterization and 
proved that $k$-bounded duplication languages are regular for $k \le 3$.
\item \textbf{Information theory}: Farnoud {\em et al.} \cite{Farnoud} introduced the concept of {\em capacity} to 
determine average information content of a $k$-bounded duplication language. 
Later, Jain {\em et al.} \cite{Jain2} introduced the notion of {\em expressiveness} to
measure a language's capability to generate words that contain certain desired substrings.
A complete characterization of fully expressive $k$-bounded duplication languages was 
provided by Jain {\em et al.} for all alphabet sizes and all $k$.
\item \textbf{Codes correcting tandem duplications}: 
Motivated by applications that store data in living organisms \cite{Arita,Heider,Liss}, 
Jain {\em et al.} \cite{Jain} proposed the study of codes that correct tandem duplications to improve the reliability of data storage. 
They investigated various types of tandem duplications and provided optimal code construction in the case 
where duplication length is at most two.
\end{itemize}

In this paper, we study the last problem and investigate algorithms associated with these codes. 
In particular, given two words ${\bf{x}}$ and ${\bf{y}}$, we look for efficient algorithms that answer the following question:
 When are the words ${\bf{x}}$ and ${\bf{y}}$ \textit{confusable} under tandem repeats? 
 In other words, are there two sequences of tandem duplications such that the resulting words ${\bf{x'}}$ and ${\bf{y'}}$ are equal?

Interestingly, as we demonstrate, even for small duplication lengths, the solutions to this question are nontrivial. 
This is surprising as efficient algorithms are well known for analogous questions in other problems in
string matching \cite{Navarro:2001}.  
Before we give an account of these results, we introduce some necessary notations and provide a formal statement of our problem.

%
%

\section{Preliminaries}


Let $\Sigma_q=\{0,1,\cdots,q-1\}$ be an alphabet of $q\ge 2$ symbols.
For a positive integer $n$, let $\Sigma_q^n$ denote the set of all strings or words of length $n$ over $\Sigma_q$,
and let $\Sigma_q^*$ denote the set of all finite words over $\Sigma_q$, or
the {\em Kleene closure} of $\Sigma_q$.
Given two words ${\bx}, {\by} \in \Sigma_q^*$, we denote their concatenation by ${\bx\by}$. 

We state the {\em tandem duplication} rules. 
For {nonnegative} integers $k\le n$ and $i\le n-k$, we define $T_{i,k} : \Sigma_q^{n} \to \Sigma_q^{n+k}$ such that
\[   
T_{i,k}({\bx}) = \bu\bv\bv\bw, \mbox{~where~} \bx=\bu\bv\bw,\, |\bu|=i,\, |\bv|=k.
\]

If a finite sequence of tandem duplications of length $k$ is performed to obtain ${\by}$ from ${\bx}$, 
then we say that ${\by}$ is a $k$-\textit{descendant} of ${\bx}$, or 
${\bx}$ is a $k$-\textit{ancestor} of ${\by}$, and denote this relation by ${\bx} \xRightarrow[k]{*} {\by}$.
Formally, ${\bx} \xRightarrow[k]{*} {\by}$ means that for some positive $t$, there exist $t$ non-negative integers $i_1,i_2, \ldots i_t$ such that ${\by}= T_{i_t,k}\circ T_{i_{t-1},k}\circ \cdots \circ T_{i_1,k}({\bx})$.

Given a sequence $\bx$ and integer $k$, we consider the set of words that may be obtained from $\bx$
via a finite number of tandem duplications of length $k$. We define
the $k$-{\em descendant cone} of $\bx$ to be the set of all $k$-descendants of $\bx$
and denote this cone by $D_k^*(\bx)$. In other words, 
\[D_k^*({\bx})\triangleq \left\{{\by} \in \Sigma_q^* \mid {\bx} \xRightarrow[k]{*} \by \right\}.\]

Our work studies tandem duplications whose lengths are upper bounded by an integer $k$
and we extend the previous definitions. 
Formally, if a finite sequence of tandem duplications of length up to $k$ is performed to obtain ${\by}$ from ${\bx}$, 
then we say that ${\by}$ is a ${\le}k$-{\em descendant} of ${\bx}$ and 
denote this relation by ${\bx} \xRightarrow[\le k]{*} y$.
We have analogous definitions of ${\le}k$-{\em descendant}, ${\le}k$-{\em ancestor}, 
and  ${\le}k$-{\em descendant cone} $D_{\le k}^*(\bx)$.

\begin{example}
Consider ${\bx}=01210$ over $\Sigma_3$. 
Since $T_{1,3}({\bx}) = 01211210$ and $T_{0,2}(01211210)=0101211210$,
we write $01210 \xRightarrow[3]{*}  01211210$ and 
$01210 \xRightarrow[\le 3]{*}  0101211210$.
Alternatively, we have that 
 $01211210 \in D_3^*({\bx})$, and 
 $0101211210 \in D_{\le 3}^*({\bx})$. 
\end{example}

\subsection{Problem Formulation}
Motivated by applications that store data on living organisms, Jain {\em et al.} \cite{Jain} 
looked at the ${\le}k$-descendant cones of a pair of words and 
asked whether the two cones have a nontrivial intersection. 
Specifically, we introduce the notion of confusability.

\begin{definition}[Confusability] Two words ${\bx}$ and ${\by}$, are said to be $k$-\textit{confusable} if 
$D_{k}^*({\bx}) \cap D_{k}^*({\by}) \neq \varnothing.$ 
Similarly, they are said to be ${\le}k$-\textit{confusable} if 
$D_{\le k}^*({\bx}) \cap D_{\le k}^*({\by}) \neq \varnothing.$ 
\end{definition}

To design error-correcting codes that store information in the DNA of living organisms, 
Jain {\em et al.} then proposed the use of codewords that are not pairwise confusable.

\begin{definition}[${\le}k$-Tandem-Duplication Codes] 
A subset ${\cal C} \subseteq \Sigma_q^n$ is a ${\le}k$-{\em tandem-duplication code} if 
for all ${\bx},{\by} \in {\cal C}$ and ${\bx} \ne {\by}$, we have that ${\bx}$ and ${\by}$ are not ${\le}k$-confusable.
We say that ${\cal C}$ is an $(n,{\le}k;q)$-tandem-duplication code or $(n,{\le}k;q)$-TD code.
\end{definition}

Therefore, to determine if a set of words is a tandem-duplication code, 
we need to verify that all pairs of distinct words are not confusable.
Hence, we state our problem of interest.
\vspace{2mm}

\hspace{40mm}{\sc Confusability Problem}

\hspace{40mm}{\bf Instance}: Two words $\bx$ and $\by$ over $\Sigma_q$, and an integer $k$

\hspace{40mm}{\bf Question}: Are $\bx$ and $\by$ ${\le}k$-confusable?
\vspace{2mm}

While the confusability problem is a natural question, efficient algorithms are only known for the case where $k\in\{1,2\}$.
We review these results in the next subsection. 
%
%
%

\subsection{Previous Work}\label{sec:previous}

\noindent{\bf Confusability}.
We summarize known efficient algorithms that determine whether two words are $k$-confusable for all $k$, and
whether they are ${\le}k$-confusable for $k\in\{1,2\}$. 
We then highlight why these methods cannot be extended for the case $k\ge 3$.
Crucial to the algorithms is the concept of irreducible words and roots.

\begin{definition} A word ${\bx}$ is said to be $k$-\textit{irreducible} if ${\bx}$ cannot be deduplicated into shorter words with deduplication of length $k$. In other words, if $\by\xRightarrow[k]{*} \bx$, then $\by=\bx$. 
The set of $k$-irreducible $q$-ary words is denoted by ${\rm Irr}_{k}(q)$ and the set of $k$-irreducible words of length $n$ is denoted by ${\rm Irr}_{k}(n,q)$.
The $k$-ancestors of ${\bx} \in \Sigma_q^*$ that are $k$-irreducible words are called the $k$-\textit{roots} of 
${\bx}$. The set of $k$-roots of $\bx$ is denoted by $R_{k}({\bx})$.
In a similar fashion, we have the definitions of ${\le}k$-{\em irreducible} words and ${\le}k$-roots of $\bx$, and 
the notation ${\rm Irr}_{\le k}(q)$, ${\rm Irr}_{\le k}(n,q)$, and $R_{\le k}(\bx)$.
\end{definition}


\begin{example}
For alphabet $\Sigma_3$, we have $R_{\le 3}(01012012) = \{012\}$, and $R_{\le 2} (012012) = \{012012\}$.
On the other hand, $R_{\le 4}(012101212) = \{012, 0121012\}$.
\end{example}

As we see, it is possible for a word to have more than one root. 
Jain {\em et al.} determined when a word has exactly one root and 
provided efficient algorithms to compute this unique root.

\begin{proposition}[Jain {\em et al.}\cite{Jain}]\label{prop:jain1}
For any ${\bx} \in \Sigma_q^*$, we have that 
$|R_{k}({\bx})|=1$ for all $k$, and $|R_{\le k}({\bx})|=1$ for all $k\in\{1,2,3\}$. 
Furthermore, there exist linear-time algorithms to compute $R_{k}({\bx})$ for all $k$. 
\end{proposition}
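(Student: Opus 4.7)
The plan is to establish uniqueness of roots by viewing deduplication as a terminating abstract rewriting system and proving local confluence, after which Newman's lemma yields unique normal forms. Termination is immediate since each deduplication strictly decreases word length, so the main work lies in verifying local confluence and in constructing the algorithm.

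For $R_{k}(\bx)$ with a fixed length $k$: I would analyse two candidate one-step deductions applied to a common word, namely the deletions of factors $\bv_1\bv_1$ and $\bv_2\bv_2$ with $|\bv_1|=|\bv_2|=k$ sitting inside $\bx$. If the two windows are disjoint, the deletions trivially commute and the outputs coincide after a single further step. If the windows overlap, the overlap forces the affected region of $\bx$ to be highly periodic, and a short case analysis on the overlap offset shows that the two intermediate words can still be joined by a bounded number of further $k$-deduplications. Hence the rewriting system is locally confluent, and by Newman's lemma $|R_{k}(\bx)|=1$.

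For $R_{\le k}(\bx)$ with $k\in\{1,2,3\}$: the same framework applies, but the rule set now contains duplications of every length $j\le k$, so the local-confluence case analysis ramifies substantially. The case $k=1$ reduces to the run-length encoding of $\bx$ and is trivial. For $k\in\{2,3\}$, I would enumerate pairs of overlapping windows of possibly different lengths $j_1,j_2\le k$, classify them by their relative offset, and verify in each configuration that the two resulting words share a common $\le k$-descendant. I expect this enumeration to be the main obstacle: the $k=3$ case already involves many sub-cases and some require more than one joining step to converge. The example $R_{\le 4}(012101212)=\{012,0121012\}$ cited above shows precisely why the argument refuses to extend to $k=4$, and so the case analysis must be tight rather than generic.

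For the linear-time algorithm computing $R_{k}(\bx)$: I would scan $\bx$ from left to right into a buffer, and after appending each symbol test whether the last $2k$ buffer symbols form a tandem factor $\bv\bv$ with $|\bv|=k$; if so, delete one copy and continue. A direct implementation already runs in $O(kn)$ time, which is linear in $n$ for fixed $k$. An amortized analysis — each buffer symbol is pushed once and popped at most once throughout the scan — together with standard precomputed longest-common-extension or $Z$-array queries on $\bx$ to perform each length-$k$ equality test in $O(1)$, yields the claimed linear-time bound. Uniqueness from the first part guarantees that the output is independent of the scanning order, completing the argument.
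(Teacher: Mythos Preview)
Your proposal is sound, but note that in the paper this proposition is not proved at all: it is attributed to Jain \emph{et al.}\ and quoted as background. The paper only supplements it, in Appendix~\ref{app:root}, with an explicit linear-time procedure for the related quantity $R_{\le k}(\bx)$ when $k\in\{2,3\}$. Your uniqueness argument via termination plus local confluence and Newman's lemma is the standard and correct route; the overlap case analysis you anticipate for $k=3$ is indeed where the substance lies, and your remark about why it breaks at $k=4$ is on point.

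On the algorithmic side, your single-pass stack scan for fixed $k$ is correct and linear. The paper's approach (for the $\le k$ version) is organised differently: it first proves a reordering lemma (Lemma~\ref{lem:ordering}) showing that any $\le 3$-derivation can be rearranged so that duplication lengths are nonincreasing, and hence the root can be reached by removing all $1$-duplicates, then all $2$-duplicates, then all $3$-duplicates, each in one linear sweep. Within each sweep the paper's {\sc RemoveDuplicates} is essentially your buffer scan. So the inner mechanics coincide, but the outer structure differs: you would interleave all lengths in one pass and rely on confluence to justify any order of reductions, whereas the paper separates the lengths and leans on the reordering lemma for correctness. The paper's decomposition buys a cleaner proof of correctness (no need to revisit confluence once the lemma is in hand) and a reusable fixed-$k$ subroutine; your approach is more self-contained and avoids the extra lemma.
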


One may easily derive linear-time algorithms to compute $R_{\le k}({\bx})$ for all for $k\in\{1,2,3\}$. 
For completeness, we formally describe these algorithms in Appendix \ref{app:root}.

As it turns out, for certain cases, determining the confusability of two words 
is equivalent to computing the roots for the words.

\newpage

\begin{proposition}[Jain {\em et al.} \cite{Jain}]\label{prop:jain2}
For all ${\bx},{\by} \in \Sigma_q^*$, we have that 
\begin{enumerate}[(i)]
\item $\bx$ and $\by$ are $k$-confusable if and only if $R_{k}({\bx})=R_{k}({\by})$ for all $k$;
\item $\bx$ and $\by$ are ${\le}k$-confusable if and only if $R_{\le k}({\bx})=R_{\le k}({\by})$ for $k\in\{1,2\}$.
\item If $R_{\le 3}({\bx})\ne R_{\le 3}({\by})$, then $\bx$ and $\by$ are not ${\le}3$-confusable.
\end{enumerate}
\end{proposition}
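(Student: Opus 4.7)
The plan is to handle all three parts by splitting each claim into an easy forward implication (confusability implies equal roots) and a harder converse (equal roots implies confusability). The forward direction is uniform across all three parts; the converse requires a confluence property of the duplication rewriting, which holds for fixed-length $k$-duplication (part~(i)) and for $\le k$-duplication when $k\in\{1,2\}$ (part~(ii)). For $k=3$ under the $\le k$ rule, only the forward direction will survive, which is exactly the asymmetric content of~(iii). Throughout the sketch I write $R$ for whichever of $R_k$ or $R_{\le k}$ is appropriate and $\xRightarrow{*}$ for the corresponding reachability.

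For the forward direction, assume $\bx$ and $\by$ are confusable and choose a common descendant $\bz$. If $\br$ is any root of $\bx$, concatenating the sequences $\br \xRightarrow{*} \bx$ and $\bx \xRightarrow{*} \bz$ gives $\br \xRightarrow{*} \bz$; since $\br$ is irreducible, $\br \in R(\bz)$. The same argument puts every root of $\by$ into $R(\bz)$. Proposition~\ref{prop:jain1} guarantees $|R(\bz)|=1$ in all three regimes, so $R(\bx)=R(\by)$. This immediately yields (iii) by contrapositive and one direction of (i) and (ii).

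For the converses of (i) and (ii), suppose $R(\bx)=R(\by)=\{\br\}$; the goal is to produce a common descendant. I would establish a local confluence (``diamond'') property: whenever a single duplication step sends $\bw$ to $\bw_1$ and another sends $\bw$ to $\bw_2$, there is a word $\bw'$ reachable from both $\bw_1$ and $\bw_2$ by further legal duplications. The cases are indexed by how the two duplication windows interact---disjoint, nested, or partially overlapping---and disjoint windows commute outright. For fixed length $k$, the nested and overlapping configurations can be reconciled by one or two additional length-$k$ duplications; for $\le k$ with $k\in\{1,2\}$, the short window sizes yield only a handful of overlap patterns, each of which I would resolve explicitly. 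With local confluence in hand, an induction on the combined length of the chains $\br \xRightarrow{*} \bx$ and $\br \xRightarrow{*} \by$ produces a common descendant and certifies confusability.

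The main obstacle is precisely this overlap case analysis. Disjoint commuting duplications are essentially free, but overlapping or nested windows can produce structurally different intermediates, and one must exhibit an explicit rewrite that drives them back together. This is also where the pattern breaks once the window length grows: overlapping variable-length duplications of length up to $4$ can generate genuinely different irreducible descendants, which is consistent with the excerpt's example $R_{\le 4}(012101212)=\{012,0121012\}$ and explains why part (iii) only states one direction for $k=3$. The proof therefore reduces to a bounded but careful enumeration of overlap configurations, checking for each one an explicit sequence of further duplications that unifies the two outcomes.
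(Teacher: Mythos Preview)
The paper does not supply a proof of this proposition; it is stated as a result of Jain \emph{et al.} and cited without argument, so there is no in-paper proof to compare your sketch against.

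On its own merits your outline is sound and is the natural approach. The forward direction is clean: any root of $\bx$ is an irreducible ancestor of the common descendant $\bz$, hence lies in $R(\bz)$, and uniqueness of roots (Proposition~\ref{prop:jain1}) forces $R(\bx)=R(\by)=R(\bz)$; this already yields (iii) and one half of (i)--(ii). For the converses, confluence of the duplication rewriting from the common root $\br$ is the right tool. One point to tighten: since duplication is non-terminating in the forward direction, \emph{local} confluence alone does not give global confluence (Newman's lemma is unavailable), so your ``induction on the combined length of the chains'' needs a bound on the length of the joining derivations. What your case analysis actually delivers is \emph{strong} confluence: for disjoint windows the two duplications commute, and for overlapping windows of fixed length $k$ or of lengths at most~$2$ a single additional duplication on each branch closes the diamond (your parenthetical ``one or two additional duplications'' already signals this). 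With strong confluence the standard tiling argument produces the common descendant. You are also right that this joinability fails for ${\le}3$-duplication---Example~\ref{confuse} in the paper exhibits two descendants of $012$ whose cones never meet---which is precisely why (iii) is stated as a one-way implication.
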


In other words, when $k\in\{1,2\}$, to determine $\bx$ and $\by$ are $\le k$-confusable,
it is both necessary and sufficient to compute the ${\le}k$-roots of $\bx$ and $\by$.
Therefore, applying Proposition \ref{prop:jain1} and the algorithms in Appendix \ref{app:root}, 
we are able to determine whether two words are ${\le}k$-confusable in linear time for $k\in\{1,2\}$.

Unfortunately, when $k=3$, it is no longer sufficient to compute the ${\le}3$-roots of $\bx$ and $\by$.
Specifically, even though $R_{\le 3}({\bx})=R_{\le 3}({\bf y})$,
it is possible that $\bx$ and $\by$ are not ${\le}3$-confusable.
We illustrate this in the next example.

\begin{example}\label{confuse}
Consider ${\bx}=012012$ and ${\by}=011112$ over $\Sigma_3$. 
The words have the same root as $R_{\le 3}({\bx})=R_{\le 3}({\bf y}) = \{012\} $. 
However, $\bx$ and $\by$ are not ${\le}3$-confusable because any ${\le}3$-descendant of ${\bx}$ has a 2 to the left of a 0, 
whereas any ${\le}3$-descendant of ${\by}$ does not.
\end{example}
Therefore, the next smallest open case is where $k=3$. 
A polynomial-time algorithm is implied by the results of Leupold {\em et al.} \cite{Leupold1} and Jain {\em et al.} \cite{Jain2}.
\begin{proposition}[Leupold {\em et al.} \cite{Leupold1}, Jain {\em et al.} \cite{Jain2}]\label{prop:regular}
Let $k\in\{2,3\}$. Let ${\bx} \in \Sigma_q^*$. 
Then $D_{\le k}^{*}({\bx})$ is a regular language. 
Furthermore, if $|{\bx}|=m$, then the deterministic finite automaton that generates $D_{\le k}^{*}({\bx})$ has $O(m)$ vertices and $O(m)$ edges. The number of vertices and edges is independent of the alphabet size.  
\end{proposition}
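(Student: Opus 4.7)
The plan is to construct, for each $\bx$ of length $m$ and each $k\in\{2,3\}$, an explicit deterministic finite automaton $\A_k(\bx)$ with $O(m)$ states and $O(m)$ transitions whose accepted language is exactly $D_{\le k}^*(\bx)$. I would handle $k=2$ first, which is structurally simpler, and then extend to $k=3$.

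For $k=2$, write $\bx=x_1 x_2\cdots x_m$ and introduce a ``spine'' of states $q_0,q_1,\ldots,q_m$ along which an accepting run advances by reading $x_1,x_2,\ldots,x_m$ in order. At each spine state $q_i$ I attach two constant-size gadgets: a self-loop on $x_i$ that encodes length-$1$ duplications at position $i$, and a two-state cycle reading the alternation $x_i x_{i+1} x_i x_{i+1}\cdots$ that encodes length-$2$ duplications of the pair $x_i x_{i+1}$. Each gadget contributes $O(1)$ states and edges per position, giving the $O(m)$ bound. Since the only symbols appearing in any gadget come from $\bx$ itself, the bound is independent of the alphabet size $q$.

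For $k=3$, I would follow the strategy of Jain \et. A length-$3$ duplication of a factor $abc$ produces $abcabc$, which admits further length-$\le 3$ duplications that interact with length-$1$ and length-$2$ duplications of neighboring positions. The key structural observation is that, locally, the patterns that can appear in any descendant belong to one of finitely many families indexed by consecutive triples of positions in $\bx$: constant runs of a single symbol, two-periodic alternations $xy xy\cdots$, and three-periodic alternations $xyz xyz\cdots$. Attaching a constant-size gadget for each such family at the corresponding spine position (in addition to the $k=2$ gadgets) produces the required DFA, and the size bound again follows from the per-position design.

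The main obstacle is proving the inclusion $D_{\le k}^*(\bx)\subseteq L(\A_k(\bx))$, namely that every descendant of $\bx$ is accepted. I would reduce this to a normalization lemma: any sequence of duplications producing some $\by\in D_{\le k}^*(\bx)$ can be rearranged into a canonical sequence that places all duplications into ``slots'' between consecutive symbols of $\bx$, with each slot receiving only patterns of the restricted forms implemented by the gadgets. For $k=2$ this is a short commutation argument that exchanges adjacent duplications acting on disjoint or nested factors. For $k=3$ the rearrangement is considerably more delicate, since length-$3$ duplications do not in general commute cleanly with length-$2$ duplications in nearby slots, and one must verify that every problematic non-commutation can be resolved by replacing the offending pair with an equivalent sequence of duplications that fits the gadget structure. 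This combinatorial analysis is the hard part and is precisely the core of the Jain \et argument; once it is in hand, the $O(m)$ bound and the $q$-independence follow immediately from the explicit construction of $\A_k(\bx)$.
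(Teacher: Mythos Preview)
The paper does not prove this proposition; it is quoted as a result of Leupold \et\ and Jain \et\ and used as a black box. There is therefore no in-paper proof to compare against.

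On its own merits, your plan is the right shape and matches the spirit of the cited constructions: a spine of $m+1$ states indexed by positions of $\bx$, with constant-size gadgets attached per position to absorb the local periodic patterns that duplications create. The $O(m)$ bound and the $q$-independence would indeed fall out of a per-position design, and you correctly identify the normalization/rearrangement lemma as the crux. That said, what you have written is a proof sketch rather than a proof: for $k=2$ you have not verified determinism (at state $q_i$ the spine edge on $x_{i+1}$ and the alternation gadget on $x_{i+1}$ must be reconciled, and the self-loop on $x_i$ interacts with the alternation gadget at $q_{i-1}$), and for $k=3$ you explicitly defer the hard combinatorics to ``the Jain \et\ argument'' without supplying it. Since that argument \emph{is} the content of the proposition, your proposal as stated is an outline that correctly locates the difficulty but does not resolve it.
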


Therefore, for two words ${\bx}$ and ${\by}$ with $|{\bx}|=m$ and $|{\by}|=n$, 
the language $D_{\le 3}^{*}({\bx}) \cap D_{\le 3}^{*}({\by})$ is regular and 
the corresponding finite automaton has $O(mn)$ vertices and $O(mn)$ edges. 
Hence, to determine if ${\bx}$ and ${\by}$ are confusable, 
it suffices to determine if the language $D_{\le 3}^{*}({\bx}) \cap D_{\le 3}^{*}({\by})$ is nonempty. 
The latter can be done in $O(mn)$ time. 
We improve this running time by providing an algorithm 
that runs in $O(\max\{m,n\})$ time.

\vspace{2mm}

\noindent{\bf Code Construction}. We are interested in determining the maximum possible size of an $(n,{\le}k;q )$-TD code. 
An  $(n,{\le}k;q )$-TD code that achieves this maximum is said to be {\em optimal}.

Motivated by Proposition \ref{prop:jain2}, Jain {\em et al.} used irreducible words to construct tandem-duplication codes. 

\begin{construction}\label{code:jain}
For $k\in\{2,3\}$ and $n \ge k$. An $(n,{\le}k; q)$-TD code ${\cal C}_I(n, {\le}k;q)$ is given by
\begin{equation*}
{\cal C}_I(n,{\le}k;3)=\bigcup_{i=1}^{n} \left\{\xi_{n-i}({\bx})\mid {\bx} \in {\rm Irr}_{\le k}(i,q)\right\}.
\end{equation*} 
Here, $\xi_{i}({\bx})=\bx z^i$, where $z$ is the last symbol of $\bx$. 
In other words, $\xi_{i}({\bx})$ is the sequence ${\bx}$ with its last symbol $z$ repeated $i$ more times.
Furthermore, the size of ${\cal C}_I(n,{\le}k;q)$ is $\sum_{i=1}^n \left|{\rm Irr}_{\le k}(i,q)\right|$.
\end{construction}

It then follows from Proposition \ref{prop:jain2} that ${\cal C}_I(n,{\le}2;q)$ is an optimal $(n,{\le}2;q)$-TD code.
Unfortunately, the TD code ${\cal C}_I(n,{\le}3;q)$ is not an optimal for $n\ge 6$,
and we illustrate this via the following example.

\begin{example}
Consider $n=6$, $q=3$. The code ${\cal C}_I(6,\le 3;3)$ from Construction~\ref{code:jain} is given by
\begin{align*}
{\cal C}_I(6,{\le}3;q)
& = \{aaa aaa\mid a\in\Sigma_3 \} \cup \{abb bbb\mid a, b\in\Sigma_3, a\ne b \}\\
& \cup \{aba aaa,abc ccc\mid a, b,c \in\Sigma_3, a\ne b, b\ne c, a\ne c \}\\
& \cup \{abac cc, abca aa, abcb bb \mid a, b, c\in\Sigma_3, a\ne b, b\ne c, a\ne c \}\\
& \cup \{aba caa, aba cbb, abc abb, abc acc, abc baa \mid a, b,c\in\Sigma_3, a\ne b, b\ne c, a\ne c \}\\
& \cup \{aba cab, aba cba, aba cbc, abc aba, abc acb, abc bab, abc bac \mid a, b,c\in\Sigma_3, a\ne b, b\ne c, a\ne c \}.
\end{align*} 
Hence, $|{\cal C}_I(6,{\le}3;3)|=111$.
However, we may remove from ${\cal C}_I(6,{\le}3;3)$ 
the six codewords  $\{abc ccc \mid a, b,c \in\Sigma_3, a\ne b, b\ne c, a\ne c \}$ and
augment the code with twelve more codewords  $\{abcabc, abbbbc \mid a, b,c \in\Sigma_3, a\ne b, b\ne c, a\ne c \}$.
Then we can check that the new code has size $117$, and 
we later verify using Proposition \ref{prop:irr2-upper} 
that the new code is indeed optimal.
For lengths at least six, we construct TD codes with strictly larger sizes.
\end{example}

\noindent{\bf Upper Bound}. We also study upper bounds on the size of an optimal $(n,{\le}3;q)$-TD code.
By definition, an $(n,{\le}3;q)$-TD code is also an $(n,{\le}2;q)$-TD code. 
Since an optimal $(n,{\le}2;q)$-TD code is provided by Construction~\ref{code:jain}, 
we have the following upper bound on the size of an optimal $(n,{\le}3;q)$-TD code.

\begin{proposition}\label{prop:irr2-upper}
The size of an $(n,{\le}3;q)$-TD code is at most $\sum_{i=1}^n \left|{\rm Irr}_{\le 2}(i,q)\right|$.
\end{proposition}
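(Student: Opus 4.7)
The plan is to reduce the bound for $\le 3$-TD codes to the (essentially known) bound for $\le 2$-TD codes, and then count distinct $\le 2$-roots.

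First I would observe the containment of code families: any sequence of $\le 2$-tandem duplications is in particular a sequence of $\le 3$-tandem duplications, so if $\bx$ and $\by$ are $\le 2$-confusable they are also $\le 3$-confusable. Taking the contrapositive, every $(n,\le 3;q)$-TD code is automatically an $(n,\le 2;q)$-TD code. Hence it suffices to prove that every $(n,\le 2;q)$-TD code has size at most $\sum_{i=1}^n |{\rm Irr}_{\le 2}(i,q)|$.

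Next I would apply Proposition~\ref{prop:jain2}(ii) with $k=2$: two words are $\le 2$-confusable if and only if they have the same (unique, by Proposition~\ref{prop:jain1}) $\le 2$-root. Consequently, in any $(n,\le 2;q)$-TD code $\C$, the map $\bx \mapsto R_{\le 2}(\bx)$ is injective, so $|\C|$ is bounded above by the number of distinct $\le 2$-roots that can arise as roots of length-$n$ words.

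Finally I would note that every $\le 2$-root of a length-$n$ word is itself a $\le 2$-irreducible word of length between $1$ and $n$, since tandem duplications never decrease length. Therefore the number of possible roots is at most $\sum_{i=1}^n |{\rm Irr}_{\le 2}(i,q)|$, which completes the bound.

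There is no real obstacle here; the statement is essentially a direct corollary of the prior work, and the only thing to make explicit is the one-line inclusion that an $(n,\le 3;q)$-TD code is an $(n,\le 2;q)$-TD code. The sharpness of the bound is witnessed by ${\cal C}_I(n,\le 2;q)$ from Construction~\ref{code:jain}, which is already optimal in the $\le 2$ setting.
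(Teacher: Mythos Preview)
Your proposal is correct and follows essentially the same approach as the paper: the paper simply remarks (in the paragraph preceding the proposition) that by definition an $(n,{\le}3;q)$-TD code is also an $(n,{\le}2;q)$-TD code, and since Construction~\ref{code:jain} gives an optimal $(n,{\le}2;q)$-TD code of size $\sum_{i=1}^n |{\rm Irr}_{\le 2}(i,q)|$, the bound follows. You have spelled out the injectivity of the root map and the length constraint on roots a bit more explicitly than the paper does, but the argument is the same.
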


Proposition \ref{prop:irr2-upper} implies that Construction~\ref{code:jain} is tight for $k=3$ and $n\le 5$.
Using a combinatorial characterization implied by our algorithm,
we improve this upper bound for longer lengths in Section \ref{sec:codes}.

\subsection{Our Contributions}

\begin{itemize}
\item {\bf Confusability}. In Section \ref{sec:confusable}, 
we present sufficient and necessary conditions for two words to be ${\le}3$-confusable and
propose a linear-time algorithm to solve the ${\le}3$-confusability problem. 

\item {\bf Estimates on code sizes}. Using insights gained from Section \ref{sec:confusable}, 
we study the size of tandem-duplication codes in Section \ref{sec:codes}.
We first improve the upper bound given by Proposition \ref{prop:irr2-upper} and 
then construct codes with larger sizes as compared to those given by Construction \ref{code:jain}.
We also provide certain explicit constructions and recursive constructions for tandem-duplication codes.
Furthermore, we determine the sizes of optimal ${\le}3$-tandem-duplication codes
for lengths up to {twenty}.
\end{itemize}


\section{Determining ${\le}3$-Confusability}
\label{sec:confusable}

We derive a linear-time algorithm to determine the ${\le}3$-confusability of two words $\bx$ and $\by$. 
For the sake of simplicity, we omit the use of ``${\le}3$'' and assume confusable, descendant and root to mean ${\le}3$-confusable, ${\le}3$-descendant and ${\le}3$-root, respectively.

Prior to stating the algorithm, we make some technical observations on tandem duplications of length at most three.
The proofs of the lemmas are deferred to Appendix \ref{app:order}.
The first lemma states that given a sequence of tandem duplications, we may reorder the tandem duplications
such that the lengths of the repeats are nonincreasing.

\begin{lemma}\label{lem:reorder}
Let $k_1<k_2\le 3$ and $\bx\in \Sigma_q^*$.
Suppose that $T_{i_2,k_2}\circ T_{i_1,k_1}(\bx)={\bx'}$ for some $i_1,i_2$.
Then there exist integers $3\ge \ell_1\ge \ell_2\ge \cdots\ge \ell_t$ and $j_1,j_2,\ldots, j_t$
such that
\[
     T_{j_t,\ell_t}\circ T_{j_{t-1},\ell_{t-1}}\circ \cdots \circ T_{j_1,\ell_1}({\bx})=\bx'. 
 \]
\end{lemma}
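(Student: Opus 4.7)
The plan is a case analysis on the relative positions of the two tandem duplications. Set $\bz = T_{i_1,k_1}(\bx) = \bu\bv\bv\bw$ with $|\bu|=i_1$ and $|\bv|=k_1$; the second operation $T_{i_2,k_2}$ then picks out a window of length $k_2$ at position $i_2$ of $\bz$. The analysis splits on whether this window meets the freshly inserted second copy of $\bv$, which occupies positions $[i_1+k_1,\, i_1+2k_1)$ of $\bz$.

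In the \emph{disjoint} case, the window of the second duplication lies entirely to the left of or entirely to the right of that inserted copy. Here a direct swap works: apply the length-$k_2$ duplication to $\bx$ first at an appropriately shifted index, then the length-$k_1$ duplication on the result, producing the same $\bx'$ with length sequence $(k_2, k_1)$, which is non-increasing since $k_2 > k_1$.

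In the \emph{overlap} case, because $k_1 < k_2 \le 3$, the pair $(k_1,k_2)$ is one of $(1,2), (1,3), (2,3)$, and within each pair only finitely many offsets $i_2 - i_1$ place the two windows in overlapping positions. For each such configuration I would compute $\bx'$ explicitly and exhibit a sequence of length-$\le 3$ tandem duplications on $\bx$ whose lengths are non-increasing. In some configurations the longer duplication can be applied first on $\bx$ and then patched with one shorter duplication; in others one must use an entirely different combination of lengths. As a representative example, with $k_1=1$, $k_2=3$, $i_2=i_1-1$, write $\bu=\bu' b$ for the last symbol of $\bu$ and take $\bv=a$ a single symbol; then $\bx = \bu' b a \bw$ and $\bx' = \bu' b a a b a a \bw$. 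The same $\bx'$ arises from $\bx$ by successively duplicating $b a$ (length $2$ at position $i_1-1$), then the trailing $a$ (length $1$), and then the leading $a$ (length $1$), giving the non-increasing length sequence $(2,1,1)$.

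The main obstacle is this overlap case: the literal swap that succeeds in the disjoint case no longer works, and the correct non-increasing sequence may use different lengths and more operations than the original pair. Since $k_1,k_2\le 3$, however, the enumeration of overlap subcases is small and finite, and each one can be settled by a direct construction of an appropriate non-increasing sequence from $\bx$ to $\bx'$, which is the content of Appendix~\ref{app:order}.
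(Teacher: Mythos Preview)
Your approach is correct and coincides with the paper's: both proofs split on whether the length-$k_2$ window hits the freshly inserted block, perform a direct index-shifted swap in the disjoint case, and dispatch the finitely many remaining offsets for each of the pairs $(k_1,k_2)\in\{(1,2),(1,3),(2,3)\}$ by explicit rewrites. Your worked example for $(k_1,k_2)=(1,3)$ with $i_2=i_1-1$ reproduces exactly the paper's substitution (a length-$2$ duplication followed by two length-$1$ duplications), differing only in the order of the two length-$1$ steps. A minor remark: your disjoint/overlap boundary is slightly more conservative than necessary, since the swap also succeeds when the window overlaps the inserted copy but stays within one period of $\bv\bv$ (e.g.\ $i_2=i_1+1$ for $(1,3)$); the paper exploits this to reduce the number of explicit overlap subcases, but your formulation just handles those extra offsets inside the enumeration, so correctness is unaffected.
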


The next lemma states that we may assume the duplications of length two and three 
are performed on segments whose symbols are distinct.
This is because if a duplication is performed on a segment whose symbols are not distinct, 
we may find an equivalent sequence of duplications of strictly shorter lengths.

\begin{lemma}\label{lem:distinct}
Let $k\in \{2,3\}$. Suppose $T_{i,k}(\bx)=\bu\bv\bv\bw$, where $\bx=\bu\bv\bw$, $|\bu|=i$ and $|\bv|=k$.
If the symbols in $\bv$ are not pairwise distinct, then 
there exist integers $k> \ell_1\ge \ell_2$ and $j_1,j_2$
such that
\[
     T_{j_2,\ell_2}\circ T_{j_1,\ell_1}({\bx})=T_{i,k}(\bx). 
 \]
\end{lemma}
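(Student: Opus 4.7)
\smallskip

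\noindent\textbf{Proof proposal.} My plan is to proceed by a direct case analysis on $k$ and the form of $\bv$. The starting observation is that each application of $T_{j,\ell}$ increases word length by exactly $\ell$, so the requirement $T_{j_2,\ell_2}\circ T_{j_1,\ell_1}(\bx)=T_{i,k}(\bx)$ forces $\ell_1+\ell_2=k$. Combined with the constraint $k>\ell_1\ge \ell_2\ge 1$, this pins down $(\ell_1,\ell_2)=(1,1)$ when $k=2$ and $(\ell_1,\ell_2)=(2,1)$ when $k=3$, so the task reduces to exhibiting explicit positions $j_1,j_2$ for each possible pattern of $\bv$.

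For $k=2$ the only non-distinct form is $\bv=aa$ with $a\in\Sigma_q$, and two consecutive length-$1$ duplications at position $i$ convert $\bu aa\bw$ into $\bu aaaa\bw=T_{i,2}(\bx)$. For $k=3$, the patterns to check are $aaa$, $aab$, $abb$, and $aba$ with $a\ne b$. In the first three cases, a length-$2$ duplication inside $\bv$ followed by a length-$1$ duplication does the job; for example, when $\bv=aab$, applying $T_{i+1,2}$ yields $\bu aabab\bw$, and then a length-$1$ duplication of the $a$ at position $i+3$ produces $\bu aabaab\bw=T_{i,3}(\bx)$. The subcases $\bv=abb$ and $\bv=aaa$ are handled analogously, by a symmetric choice of starting position for the length-$2$ duplication.

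The case $\bv=aba$ is the main subtlety, since the repeated symbol is not in adjacent positions of $\bv$. Here I would note that both $T_{i,2}$ and $T_{i+1,2}$ produce the same intermediate string $\bu ababa\bw$, because duplicating $ab$ at position $i$ and duplicating $ba$ at position $i+1$ give the same length-$5$ block precisely when $\bv$ begins and ends with the same symbol. The key observation is that this intermediate has an $a$ at position $i+2$ in exactly the right place: applying $T_{i+2,1}$ to it produces $\bu ab\cdot a\cdot a\cdot ba\bw=\bu abaaba\bw=T_{i,3}(\bx)$. I expect this non-adjacent case to be the only place where a non-obvious choice of positions is needed; the remaining subcases are routine bookkeeping.
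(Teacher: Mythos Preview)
Your proposal is correct and follows essentially the same approach as the paper: a direct case analysis on the pattern of $\bv$, exhibiting explicit positions for a length-$2$ duplication followed by a length-$1$ duplication (or two length-$1$ duplications when $k=2$). The paper's proof lists the same cases with concrete $T_{j,\ell}$ compositions; your additional observation that $\ell_1+\ell_2=k$ forces $(\ell_1,\ell_2)\in\{(1,1),(2,1)\}$ is a nice framing but not strictly needed.
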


Henceforth, we use the notation ${\bx} \xRightarrow[k_d]{*} {\by}$
to denote that there is a sequence of tandem duplications of length $k$ of {\em distinct} symbols from ${\bx}$ to ${\by}$. 
It follows from definition that ${\bx} \xRightarrow[1]{*} {\by}$
is equivalent to ${\bx} \xRightarrow[1_d]{*} {\by}$.
The next lemma is immediate from Lemmas \ref{lem:reorder} and \ref{lem:distinct}.

\begin{lemma}\label{lem:ordering}
Suppose that ${\bx} \xRightarrow[\le3]{*} {\bx'}$.
Then there exist ${\bx_1}$ and ${\bx_2}$ such that
${\bx} \xRightarrow[3_d]{*} {\bx_2} \xRightarrow[2_d]{*} {\bx_1} \xRightarrow[1_d]{*} {\bx'}.$
Furthermore, $\bx_1$ and $\bx_2$ are uniquely determined by
 $\bx_1=R_{1}(\bx')$ and $\bx_2=R_{\le 2}(\bx')$.
\end{lemma}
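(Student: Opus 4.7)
The plan is to take any witnessing sequence of duplications realizing $\bx\xRightarrow[\le3]{*}\bx'$ and rewrite it into the canonical form claimed, and then to identify the intermediates with the roots via Proposition~\ref{prop:jain1}. For existence, I would alternately apply the two auxiliary lemmas on the witness sequence: whenever two adjacent operations violate the nonincreasing-length order, invoke Lemma~\ref{lem:reorder} to replace them by a nonincreasing block; and whenever some operation $T_{i,k}$ with $k\in\{2,3\}$ acts on a segment $\bv$ that is not pairwise distinct, invoke Lemma~\ref{lem:distinct} to replace it by two operations of strictly smaller lengths. A suitable well-founded monovariant (for example a lexicographic ordering on the multiset of operation lengths) shows that this rewriting process terminates, and the resulting stable sequence consists of length-$3$ distinct duplications, followed by length-$2$ distinct duplications, followed by length-$1$ duplications; this exhibits the desired intermediates $\bx_2$ and $\bx_1$ and gives $\bx \xRightarrow[3_d]{*} \bx_2 \xRightarrow[2_d]{*} \bx_1 \xRightarrow[1_d]{*} \bx'$.

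To identify these intermediates with the roots, the next step is an auxiliary preservation claim: a length-$3$ distinct duplication preserves $\le 2$-irreducibility, and a length-$2$ distinct duplication preserves $1$-irreducibility. Granting this, $\le 2$-irreducibility of $\bx$ propagates through the length-$3$ block to $\bx_2$, and $1$-irreducibility of $\bx_2$ propagates through the length-$2$ block to $\bx_1$. Since $\bx_1 \xRightarrow[1_d]{*} \bx'$ exhibits $\bx_1$ as a $1$-irreducible $1$-ancestor of $\bx'$, it is a $1$-root of $\bx'$; similarly $\bx_2$ is a $\le 2$-root of $\bx'$. The uniqueness of $k$-roots and $\le k$-roots for $k \le 3$ in Proposition~\ref{prop:jain1} then forces $\bx_1 = R_1(\bx')$ and $\bx_2 = R_{\le 2}(\bx')$; in particular, the intermediates do not depend on the initial choice of witnessing sequence.

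The main obstacle I expect is the combinatorial verification of the preservation claim. Writing the duplication as $\bu\bv\bw \mapsto \bu\bv\bv\bw$ with $\bv$ having pairwise distinct entries, any length-$1$ or length-$2$ window lying entirely inside $\bu\bv$ or inside $\bv\bw$ is inherited from $\bu\bv\bw$; the real analysis concerns windows that meet the new junction between the two copies of $\bv$ (and, for length-$2$ repeats, the shifted right boundary with $\bw$). There one enumerates the finitely many length-$2$ and length-$4$ windows crossing the junction and rules out the tandem patterns $xx$ and $xyxy$ using the pairwise distinctness of the entries of $\bv$; the corresponding patterns at the shifted right boundary translate to patterns already excluded in $\bu\bv\bw$. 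This case analysis is elementary but must be done carefully enough to cover both the length-$3$ and length-$2$ instances of the preservation statement.
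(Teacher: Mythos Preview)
Your existence argument is correct and matches the paper's one-line justification (``immediate from Lemmas~\ref{lem:reorder} and~\ref{lem:distinct}''): the rewriting terminates under the lexicographic monovariant you sketch, and a sequence on which neither lemma fires is exactly one in the form $3_d$-block, then $2_d$-block, then $1_d$-block.

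The identification argument has a real gap. You write that ``$\le 2$-irreducibility of $\bx$ propagates through the length-$3$ block to $\bx_2$,'' but the lemma does not assume $\bx$ is $\le 2$-irreducible. Your preservation claims (a $3_d$-duplication preserves $\le 2$-irreducibility, a $2_d$-duplication preserves $1$-irreducibility) are correct and your junction analysis is the right way to verify them, but they cannot be anchored at $\bx$ without that hypothesis. In fact the ``Furthermore'' clause is false as stated: for $\bx = 00$ and $\bx' = 0000$, no $3_d$- or $2_d$-duplication is available from $00$, so the only decomposition has $\bx_1 = \bx_2 = 00$, whereas $R_1(\bx') = R_{\le 2}(\bx') = 0$. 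The paper does not actually prove this clause either; where the formula is invoked later (the proof of Theorem~\ref{thm:main}), the starting word is ${\rm Reg}(\br)$, a prefix of a ${\le}3$-irreducible root and hence $\le 2$-irreducible, and in that setting your preservation argument goes through cleanly. So the gap reflects a missing hypothesis in the lemma's statement rather than a flaw in your strategy; adding ``$\bx$ is $\le 2$-irreducible'' to the hypothesis makes your proof of the identification complete.
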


Equipped with this lemma, we state the following theorem that provides a 
necessary and sufficiency condition for two words to be confusable.

\begin{theorem}
\label{confusable}
Two words ${\bx}$ and ${\by}$ are ${\le}3$-confusable if and only if there exist ${\bx'}$ and ${\by'}$ such that 
\[{\bx} \xRightarrow[3_d]{*} {\bx'},~{\by} \xRightarrow[3_d]{*} {\by'}
\mbox{ and }R_{\le 2}({\bx'})=R_{\le 2}({\by'}).\]
\end{theorem}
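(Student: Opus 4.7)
The plan is to derive both directions of the equivalence from the reordering machinery already in place (Lemma~\ref{lem:ordering}) together with the characterisation of $\le 2$-confusability from Proposition~\ref{prop:jain2}(ii). The key observation is that Lemma~\ref{lem:ordering} lets us funnel any $\le 3$-duplication sequence through a canonical intermediate word (the $\le 2$-root of the endpoint), so a hypothetical collision of two descendant cones forces an equality of these canonical intermediates. This is what essentially makes the theorem follow with little additional work.

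For the $(\Leftarrow)$ direction, I would start from words $\bx'$ and $\by'$ satisfying the stated conditions. Since $R_{\le 2}(\bx')=R_{\le 2}(\by')$, Proposition~\ref{prop:jain2}(ii) applied with $k=2$ gives that $\bx'$ and $\by'$ are $\le 2$-confusable, so there exists a common word $\bz\in D_{\le 2}^{*}(\bx')\cap D_{\le 2}^{*}(\by')$. Concatenating with the assumed derivations $\bx\xRightarrow[3_d]{*}\bx'$ and $\by\xRightarrow[3_d]{*}\by'$ (both of which are, in particular, $\le 3$-duplication sequences) produces $\bz\in D_{\le 3}^{*}(\bx)\cap D_{\le 3}^{*}(\by)$, showing that $\bx$ and $\by$ are $\le 3$-confusable.

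For the $(\Rightarrow)$ direction, I would fix a common descendant $\bz \in D_{\le 3}^{*}(\bx)\cap D_{\le 3}^{*}(\by)$. Apply Lemma~\ref{lem:ordering} separately to the two derivations $\bx\xRightarrow[\le 3]{*}\bz$ and $\by\xRightarrow[\le 3]{*}\bz$. This yields intermediate words $\bx'=R_{\le 2}(\bz)$ and $\by'=R_{\le 2}(\bz)$ with $\bx\xRightarrow[3_d]{*}\bx'$ and $\by\xRightarrow[3_d]{*}\by'$. Since both are precisely $R_{\le 2}(\bz)$, we have $\bx'=\by'$; moreover, by Proposition~\ref{prop:jain1}, the $\le 2$-root is unique, and $\bx'$, $\by'$ are $\le 2$-irreducible, so $R_{\le 2}(\bx')=\{\bx'\}=\{\by'\}=R_{\le 2}(\by')$. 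This is exactly the condition claimed.

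The only ``work'' in the argument is packaged inside Lemma~\ref{lem:ordering}, which itself rests on Lemmas~\ref{lem:reorder} and~\ref{lem:distinct}. The potential obstacle to worry about is whether Lemma~\ref{lem:ordering} really delivers the \emph{distinct}-symbol version of the length-$3$ prefix $\bx\xRightarrow[3_d]{*}\bx'$ and not just a bare $\xRightarrow[3]{*}$; this is exactly what Lemma~\ref{lem:distinct} ensures, so both stages of the canonical decomposition can be taken to act on segments with pairwise distinct symbols. Once this is in hand, the theorem is essentially a bookkeeping exercise that aligns the two canonical derivations at their common $\le 2$-root.
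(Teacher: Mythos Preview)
Your proposal is correct and follows essentially the same approach as the paper: both directions hinge on Lemma~\ref{lem:ordering} for the forward implication and on Proposition~\ref{prop:jain2}(ii) for the reverse. The only cosmetic difference is that in the forward direction you use the ``furthermore'' clause of Lemma~\ref{lem:ordering} to observe directly that $\bx'=\by'=R_{\le 2}(\bz)$, whereas the paper instead notes that $\bx'$ and $\by'$ are $\le 2$-confusable (via the common descendant $\bz$) and then invokes Proposition~\ref{prop:jain2}(ii) to conclude $R_{\le 2}(\bx')=R_{\le 2}(\by')$; your route is marginally more direct but not substantively different.
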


\begin{proof}
		Suppose that ${\bx}$ and ${\by}$ are confusable. 
		Then there exists ${\bz}$ such that 
		${\bx} \xRightarrow[\le 3]{*} {\bz}$ and ${\by} \xRightarrow[\le 3]{*} {\bz}$.
		By Lemma~\ref{lem:ordering}, there exist ${\bx_1}, {\bx_2}, {\by_1}, {\by_2}$ such that
		${\bx} \xRightarrow[3_d]{*} {\bx_2} \xRightarrow[2_d]{*} {\bx_1} \xRightarrow[1_d]{*} {\bz}$ and 
		${\by} \xRightarrow[3_d]{*} {\by_2} \xRightarrow[2_d]{*} {\by_1} \xRightarrow[1_d]{*} {\bz}$. 
		Hence we set ${\bx'}={\bx_2}$ and ${\by'}={\by_2}$. Note that by Proposition~\ref{prop:jain2}, $R_{\le 2}({\bx'})=R_{\le 2}({\by'})$ since 
		$\bx'$ and $\by'$ are ${\le}2$-confusable.
		
		Conversely, suppose that there exist ${\bx'}$ and ${\by'}$ such that
		${\bx} \xRightarrow[3_d]{*} {\bx'}$, ${\by} \xRightarrow[3_d]{*} {\by'}$, and $R_{\le 2}({\bx'})=R_{\le 2}({\by'})$. 
		Since ${\bx'}$ and ${\by'}$ have a common ${\le}2$-root, it follows from Proposition~\ref{prop:jain2} that ${\bx'}$ and ${\by'}$ are ${\le}2$-confusable. Thus, ${\bx}$ and ${\by}$ are confusable.
	\end{proof}

In conclusion, for two words ${\bx}$ and ${\by}$ to be confusable, 
it is equivalent to checking if there are tandem duplications of length three that make 
their descendants ${\bx'}$ and ${\by'}$ have the same ${\le}2$-root.
We make use of this important fact in the next section.

\subsection{Strategy behind Algorithm}

We outline our strategy to determine the confusability of two words $\bx$ and $\by$. Recall from Proposition \ref{prop:jain2}(iii), if  ${\bx}$ and ${\by}$ have different roots, 
then they are not confusable.
Hence, our first step is to determine $R_{\le 3}(\bx)$ and $R_{\le 3}(\by)$ and see if the roots are equal.
If the roots are not equal, we immediately conclude that $\bx$ and $\by$ are not confusable.
Therefore, the nontrivial task is to determine confusability when $R_{\le 3}(\bx)=R_{\le 3}(\by)$.
Henceforth, we assume that the {\em common root of $\bx$ and $\by$ is $\br$}.

Suppose that $\br$ contains at most two distinct symbols.
Then from Lemma \ref{lem:ordering}, 
we have that $\br\xRightarrow[\le 2]{*}\bx$ and $\br\xRightarrow[\le 2]{*}\by$.
In other words, $\br$ is also the ${\le}2$-root of both $\bx$ and $\by$.
Applying Proposition \ref{prop:jain2}, we have that $\bx$ and $\by$ are ${\le}2$-confusable and so, ${\le}3$-confusable.
Therefore, it remains to consider the case where $\br$ contains {\em at least three distinct} symbols.

Given $\br$, $\bx$ and $\by$, our next step is to compute certain proper prefixes ${\rm *Pref}(\br,\bx)$ and ${\rm *Pref}(\br,\by)$ of $\bx$ and $\by$, respectively, such that {\em under certain conditions} the following holds:
\[ \left(\bx \mbox{ and } \by \mbox{ are confusable}\right)
\mbox{ if and only if }
\left(\bx\setminus{\rm *Pref}(\br,\bx)\mbox{ and } \by\setminus{\rm *Pref}(\br,\by) \mbox{ are confusable}\right)\,.\]

Here, $\bx\setminus \bz$ denotes the word we obtain by removing the prefix $\bz$ from $\bx$.
Since the words on the righthand side are strictly shorter than the words on the lefthand side, we may repeat this process for the shorter words until the common root of the words has less than three distinct symbols.

To define the prefixes ${\rm *Pref}(\br,\bx)$ and ${\rm *Pref}(\br,\by)$, we introduce more notation.

Now, $\br$ contains at least three distinct symbols.
Since $\br\in{\rm Irr}_{\le 3}(q)$,
we easily check that either $r_1r_2r_3$ or $r_2r_3r_4$ is a substring with three distinct symbols.
Define ${\rm main}(\br)$ to be the first substring in $\br$ with three distinct symbols and 
we define the {\em region of $\br$}, denoted by ${\rm Reg}(\br)$, to be a certain prefix of $\br$ according to the following rule.

If $r_1=r_3$, we set
\[   
{\rm main}({\br}) \triangleq r_2 r_3 r_4, \mbox{ and }
{\rm Reg}({\br}) \triangleq 
     \begin{cases}
       r_1 r_2 r_3 r_4=r_1r_2r_1r_4, &\text{if } r_2 \neq r_5, \\
       r_1 r_2 r_3 r_4 r_5=r_1r_2r_1r_4r_2, &\text{if } r_2 = r_5, r_3 \neq r_6, \\
       r_1 r_2 r_3 r_4 r_5 r_6=r_1r_2r_1r_4r_2r_1, &\text{otherwise.}
     \end{cases}
\]

If $r_1 \neq r_3$, we set
\[   
{\rm main}({\br}) \triangleq r_1 r_2 r_3, \mbox{ and }
{\rm Reg}({\br}) \triangleq 
     \begin{cases}
       r_1 r_2 r_3, &\text{if } r_1 \neq r_4, \\
       r_1 r_2 r_3 r_4=r_1 r_2 r_3r_1, &\text{if } r_1 = r_4, r_2 \neq r_5, \\
       r_1 r_2 r_3 r_4 r_5=r_1 r_2 r_3r_1r_2, &\text{otherwise.}
     \end{cases}
\]

Intuitively, the region ${\rm Reg}(\br)$ of $\br$ is defined as above so that the following properties of the region  hold.
\begin{lemma}\label{lem:reg}
Suppose the root $\br$ has three distinct symbols. Define ${\rm Reg}(\br)$ as above.
Then ${\rm Reg}(\br)$ can be written as $\bw(abc)^\ell ab$, where $a,b,c$ are distinct symbols, $\ell\in\{0,1\}$ and 
$\bw$ is prefix of length at most three over the alphabet $\{a,b,c\}$.
Furthermore, if ${\rm Reg}(\br)\xRightarrow[3_d]{*} {\bz}$, then $\bz$ can be written as $\bw(abc)^m ab$
with $m\ge \ell$. Also, the symbol in $\br$ after the prefix ${\rm Reg}(\br)$ is not $c$.
\end{lemma}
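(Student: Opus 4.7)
The plan is to prove all three assertions by a direct case analysis on the defining rule of ${\rm Reg}(\br)$, which splits into six subcases: three for $r_1=r_3$ and three for $r_1\ne r_3$. In each subcase I exhibit an explicit witness decomposition. For example, when $r_1=r_3$ and $r_2\ne r_5$ I would take $\bw=r_1r_2$, $(a,b,c)=(r_1,r_4,r_2)$, and $\ell=0$, so that ${\rm Reg}(\br)=r_1r_2\cdot r_1r_4=\bw\cdot ab$; and when $r_1\ne r_3$, $r_1=r_4$, and $r_2=r_5$, I would take $\bw$ empty, $(a,b,c)=(r_1,r_2,r_3)$, and $\ell=1$, so ${\rm Reg}(\br)=r_1r_2r_3\cdot r_1r_2=(abc)\cdot ab$. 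In each row, $a,b,c$ are pairwise distinct using the ${\le}3$-irreducibility of $\br$ (which gives $r_i\ne r_{i+1}$) together with the defining inequalities of the subcase; $|\bw|\le 3$ by inspection; and $\bw\in\{a,b,c\}^*$ because $\bw$ is assembled from the first few symbols of $\br$, which all come from the support of ${\rm main}(\br)$.

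For the invariance claim I would induct on the number of $\xRightarrow[3_d]{*}$ steps and reduce to the following single-step statement: if $\bu=\bw(abc)^m ab$, then every length-$3$ substring of $\bu$ whose three symbols are pairwise distinct is a cyclic shift of $abc$, namely one of $abc$, $bca$, or $cab$, and duplicating any such occurrence yields $\bw(abc)^{m+1}ab$. Substrings lying entirely inside $(abc)^m ab$ are cyclic shifts by construction. Substrings lying entirely inside $\bw$ are either absent (since $|\bw|\le 3$ gives at most one length-$3$ candidate) or contain a repeated letter, as one directly verifies row by row; for instance $\bw=r_1r_2r_1$ in the subcase $r_1=r_3$, $r_2=r_5$, $r_3\ne r_6$. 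Substrings crossing the boundary $\bw\mid(abc)^m$ are few in number, and a row-by-row check shows that every distinct-symbol candidate is again a cyclic shift of the underlying $3$-cycle; duplicating it visibly extends the periodic region by one full period.

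For the last assertion, let $s$ be the symbol of $\br$ immediately following ${\rm Reg}(\br)$, if it exists. In the four subcases with $\ell=0$ the inequality $s\ne c$ is immediate from the defining condition of the subcase; for instance $r_2\ne r_5$ gives $s=r_5\ne r_2=c$, and $r_1\ne r_4$ gives $s=r_4\ne r_1=c$. In the two subcases with $\ell=1$, assuming $s=c$ would force $\br$ to contain the block $\bw(abc)^2$, which includes the length-$6$ factor $abc\cdot abc$ with $a,b,c$ distinct; this is a forbidden length-$3$ tandem repeat and contradicts the ${\le}3$-irreducibility of $\br$.

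The main obstacle is bookkeeping: the ordering of $a,b,c$ and the size of $\bw$ differ from row to row and must be chosen so that the periodic tail $(abc)^m ab$ matches, and in each row one must simultaneously verify that (i) the exhibited witness matches ${\rm Reg}(\br)$; (ii) no distinct-symbol length-$3$ substring of $\bw(abc)^m ab$ escapes the cyclic-shift pattern; and (iii) the condition on the trailing symbol $s$ is consistent with the ${\le}3$-irreducibility of $\br$. Once the six-row witness table is laid out, each of the three assertions reduces to a short check per row.
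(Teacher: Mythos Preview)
The paper does not actually give an explicit proof of this lemma; it states the result, illustrates one subcase with an example, and leaves the verification to the reader as an immediate consequence of the six-case definition of ${\rm Reg}(\br)$. Your case-by-case verification is correct and is exactly the approach the paper implicitly relies on: once the witness table is written down, each of the three assertions is a short per-row check, which is why the authors evidently regarded the lemma as routine and omitted the argument.
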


\begin{example}
Suppose that ${\br}=010201$. Then ${\rm main}({\br})=102$ and ${\rm Reg}({\br})=0102$.
Indeed, we can write ${\rm Reg}({\br})$ as $\bw(abc)^\ell ab$ with $\bw=01$, $\ell=0$, and $abc=021$.
We also check that the next symbol after ${\rm Reg}({\br})$ in ${\br}$ is $0$, which is not $c=1$.
\end{example}

Given ${\rm Reg}(\br)=\bw(abc)^\ell ab$, we consider the longest prefix of ${\bx}$ that is generated from ${\rm Reg}(\br)$ through a sequence of tandem duplications of length at most three.
We call this prefix the {\em extended region of $\br$ in $\bx$} and 
denote it with ${\rm Ext}({\rm Reg}(\br),{\bx})$. 
Finally, we define our special prefix ${\rm *Pref}(\br,\bx)$. 
Suppose that ${\rm Ext}({\rm Reg}(\br),{\bx})=p_1p_2\cdots p_t=\bp$
and $p_i$ is the last appearance of $a$ in $\bp$.
The {\em special prefix} ${\rm *Pref}(\br,\bx)$ is given by $p_1p_2\cdots p_{i-1}$.

\begin{example} Let ${\br}=010201$ and ${\bx}=01102021020120111$. 
Recall that ${\rm Reg}({\br})=0102$ and $a=0$. Then 
\[{\rm Ext}(0102,{\bx})=0110202102 \mbox{ and } {\rm*Pref}(\br, \bx)=01102021.\]
\end{example}

We summarize the relationships of $\br$, $\bx$, ${\rm Reg}({\br})$, ${\rm Ext}({\rm Reg}(\br),{\bx})$, and ${\rm *Pref}(\br,\bx)$ in Figure \ref{fig:prefixes} and 
we are ready to state the main characterization theorem. 

\begin{figure}[!t]
\begin{center}
\vspace{-2mm}
\includegraphics[width=15cm]{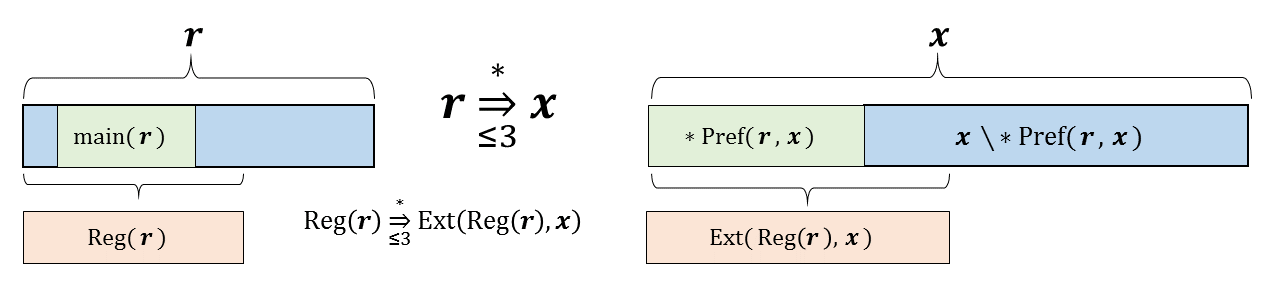}
\end{center}
\caption{Prefixes Used to Determine Confusability}
\label{fig:prefixes}
\end{figure}

\begin{theorem}\label{thm:main}
		Suppose that ${\bx}$ and ${\by}$ are two words such that ${\br}=R_{\le 3}({\bx})=R_{\le3}({\by})$ and
		$\br$ contains at least three distinct symbols.
		Set 
		\begin{align*}
			\bp&={\rm Ext}({\rm Reg}({\br}),{\bx}),&
			\bq&={\rm Ext}({\rm Reg}({\br}),{\by})\\
			\ba&={\bx}\setminus{\rm *Pref}({\br},{\bx})& 
			\bb&={\by}\setminus{\rm *Pref}({\br},{\by}).
		\end{align*}
		Then ${\bx}$ and ${\by}$ are confusable if and only if ${\bp}$ and ${\bq}$ are confusable, and ${\ba}$ and ${\bb}$ are confusable.
	\end{theorem}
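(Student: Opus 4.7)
My plan is to invoke Theorem~\ref{confusable} to reduce confusability to the existence of $3_d$-descendants with matching $\le 2$-roots, and then track how such duplications interact with the three-piece decomposition $\bx = \bu_\bx \bv_\bx \bw_\bx$, where $\bu_\bx = {\rm *Pref}(\br,\bx)$, $\bv_\bx = p_i\cdots p_t$ is the overlap portion of $\bp$ and $\ba$ (so $\bp = \bu_\bx\bv_\bx$ and $\ba = \bv_\bx\bw_\bx$), together with the analogous decomposition $\by = \bu_\by \bv_\by \bw_\by$.

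For the forward direction, assume $\bx$ and $\by$ are confusable and use Lemma~\ref{lem:ordering} to obtain $\bx \xRightarrow[3_d]{*}\bx'$ and $\by \xRightarrow[3_d]{*}\by'$ with $R_{\le 2}(\bx')=R_{\le 2}(\by')$. The central step is a boundary-preservation claim: every $3_d$-duplication in such a sequence can be reorganized so as not to straddle either boundary of $\bv_\bx$ (respectively $\bv_\by$); the sequence then factors into operations acting only within the $\bu_\bx\bv_\bx$-segment and operations acting only within the $\bv_\bx\bw_\bx$-segment. The step leans heavily on Lemma~\ref{lem:reg}: the rigid form $\bw(abc)^m ab$ for $3_d$-descendants of ${\rm Reg}(\br)$, combined with the fact that the character of $\br$ immediately after ${\rm Reg}(\br)$ is not $c$, forces the local neighborhoods of both boundaries to preclude three-distinct-symbol triples straddling them. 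The induced decomposition $\bx' = \bu_\bx^*\bv_\bx^*\bw_\bx^*$ then gives $\bp \xRightarrow[3_d]{*}\bu_\bx^*\bv_\bx^* =: \bp'$ and $\ba \xRightarrow[3_d]{*}\bv_\bx^*\bw_\bx^* =: \ba'$, with the analog for $\by$. A parallel local analysis shows length-$\le 2$ deduplications also preserve the three-part decomposition, so $R_{\le 2}(\bx')=R_{\le 2}(\by')$ splits into $R_{\le 2}(\bp')=R_{\le 2}(\bq')$ and $R_{\le 2}(\ba')=R_{\le 2}(\bb')$. Theorem~\ref{confusable} then yields confusability of $(\bp,\bq)$ and of $(\ba,\bb)$.

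For the backward direction, extract via Theorem~\ref{confusable} descendants $\bp',\bq',\ba',\bb'$ of the respective pairs with matching pairwise $\le 2$-roots. The boundary-preservation argument lets us arrange $\bp' = \bu_\bx^*\bv_\bx^*$ and $\ba' = \bv_\bx^*\bw_\bx^*$ so that they share the same overlap $\bv_\bx^*$ (and similarly for $\by$), after possibly extending each sequence to synchronize the overlaps. Composing the two $3_d$-sequences then gives $\bx \xRightarrow[3_d]{*}\bu_\bx^*\bv_\bx^*\bw_\bx^*$ and $\by \xRightarrow[3_d]{*}\bu_\by^*\bv_\by^*\bw_\by^*$, whose $\le 2$-roots match piecewise and hence globally. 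A final appeal to Theorem~\ref{confusable} yields confusability of $\bx$ and $\by$.

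The main obstacle is the boundary-preservation claim: that neither the $3_d$-duplications in the sequences nor the length-$\le 2$ deduplications used to compute the $\le 2$-roots of a common descendant need to straddle either boundary of $\bv_\bx$ (or $\bv_\by$). Proving this rigorously requires a finite but delicate case analysis over the possible shapes of ${\rm Reg}(\br)$ enumerated in Lemma~\ref{lem:reg}, combined with the constraint on the character of $\br$ just past ${\rm Reg}(\br)$. A secondary difficulty is harmonizing the overlap $\bv_\bx^*$ across the common descendants of $(\bp,\bq)$ and $(\ba,\bb)$ when assembling them in the backward direction; this, too, rests on the structural rigidity of the region.
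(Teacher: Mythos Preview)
Your forward direction is essentially the paper's argument, but two points deserve sharpening. First, the ``boundary preservation'' for $3_d$-duplications does not rely on any local analysis precluding distinct triples at the boundaries of $\bv_\bx$; the paper simply notes that the overlap $\bv_\bx$ has the form $ab^k$ with $k\ge 1$, hence length $\ge 2$, so every length-$3$ window of $\bx$ lies entirely in $\bp=\bu_\bx\bv_\bx$ or entirely in $\ba=\bv_\bx\bw_\bx$. (A distinct triple \emph{may} straddle one boundary of $\bv_\bx$; it just cannot reach both $\bu_\bx$ and $\bw_\bx$.) Second, for the $\le 2$-root split, your plan to argue that $\le 2$-deduplications also respect the three-piece decomposition is more delicate than necessary and can fail at the $\bu_\bx^*/\bv_\bx^*$ boundary if you are not careful. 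The paper avoids this entirely: it computes $R_{\le 2}(\bp')=\bw(abc)^i ab$ directly from Lemma~\ref{lem:reg}, shows that $\bw(abc)^i R_{\le 2}(\ba')$ is already $\le 2$-irreducible (since $R_{\le 2}(\ba')$ begins with $ab$ and $\bw(abc)^i$ ends in $c$ but not $cac$), and then uses the fact that the symbol of $\br$ immediately after ${\rm Reg}(\br)$ is not $c$ to conclude $R_{\le 2}(\ba')$ does not begin with $abc$. This forces the exponents to match and $R_{\le 2}(\ba')=R_{\le 2}(\bb')$ with no case analysis.

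Your backward direction is substantially overbuilt. The paper's argument is two lines: since the overlap $\bv_\bx=ab^k$ can be doubled inside $\bx$ using $\le 2$-duplications, the concatenation $\bp\ba$ is already a $\le 3$-descendant of $\bx$, and likewise $\bq\bb$ is a descendant of $\by$. Confusability of $(\bp,\bq)$ and of $(\ba,\bb)$ immediately gives confusability of the concatenations $(\bp\ba,\bq\bb)$ (concatenate common descendants), hence of $(\bx,\by)$. No overlap synchronization is needed.
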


	\begin{proof}
		

		From Lemma \ref{lem:reg}, we may set ${\rm Reg}(\br)=\bw(abc)^\ell ab$.
		Since ${\rm Reg}(\br)\xRightarrow[\le 3]{*}\bp$, Lemma \ref{lem:ordering}
		implies that ${\rm Reg}(\br)\xRightarrow[3_d]{*}R_{\le 2}(\bp)$. So,
		$R_{\le 2}(\bp)=\bw(abc)^mab$ for some $m\ge \ell$, and $R_{\le 2}(\bq)=\bw(abc)^jab$ for some $j\ge \ell$.
		
		Suppose that ${\bx}$ and ${\by}$ are confusable. By Theorem~\ref{confusable}, there exist $3_d$-descendants of ${\bx}$ and ${\by}$ namely ${\bf x'}$ and ${\bf y'}$, such that
		$R_{\le 2}({\bx'})=R_{\le 2}({\by'})$.
		Set 
		\begin{align*}
			\bp'&={\rm Ext}({\rm Reg}({\br}),{\bx'}),&
			\bq'&={\rm Ext}({\rm Reg}({\br}),{\by'})\\
			\ba'&={\bx'}\setminus{\rm *Pref}({\br},{\bx'})& 
			\bb'&={\by'}\setminus{\rm *Pref}({\br},{\by'}).
		\end{align*}
		
		Since the overlapping region between $\bp$ and $\ba$ must be of the form 
		$ab\cdots b$, each duplication of distinct triplets from ${\bx}$ to ${\bx'}$ must be either entirely in $\bp$ or in $\ba$. In other words,
		$\bp \xRightarrow[3_d]{*} \bp'$ and $\ba \xRightarrow[3_d]{*} \ba'$, and likewise $\bq \xRightarrow[3_d]{*} \bq'$ and $\bb \xRightarrow[3_d]{*} \bb'$.
		Looking at the overlapping region between $\bp'$ and $\ba'$, 
		we can observe that $R_{\le 2}({\bx'}) = R_{\le 2}(\bp' \ba')$. 
		Then ${\rm Reg}(\br)\xRightarrow[\le 3]{*}\bp'$ and Lemma \ref{lem:ordering}
		imply that ${\rm Reg}(\br)\xRightarrow[3_d]{*}R_{\le 2}(\bp)$ and so,
		$R_{\le 2}(\bp')=\bw(abc)^iab$ for some $i\ge m$.
		
		Next, $\bw(abc)^i R_{\le 2}(\ba')$ is a ${\le}2$-irreducible word, 
		because $R_{\le 2}(\ba')$ starts with $ab$ 
		and by definition, $\bw(abc)^i$ ends with $c$ but not $cac$.
		Therefore, $\bw(abc)^i R_{\le 2}(\ba')$ is the ${\le}2$-root of $\bx'$, or,
		$R_{\le 2}({\bx'})=\bw(abc)^i R_{\le 2}(\ba')$ and likewise $R_{\le 2}({\by'})=\bw(abc)^h R_{\le 2}(\bb')$ for some $h\ge j.$
		
		Since the first symbol of $R_{\le 2}({\bx'}\setminus \bp')$ and 
		$R_{\le 2}({\by'}\setminus \bq')$ cannot be $c$,
		neither $R_{\le 2}(\ba')$ nor $R_{\le 2}(\bb')$ start with $abc$. 
		Therefore, since $R_{\le 2}({\bx'})=R_{\le 2}({\by'})$,
		we can deduce that $h=i$ and $R_{\le 2}(\ba')=R_{\le 2}(\bb')$. Since $h=i$, we have $R_{\le 2}(\bp')=R_{\le 2}(\bq')$. Together with $\bp \xRightarrow[3_d]{*} \bp'$, $\bq \xRightarrow[3_d]{*} \bq'$, by Theorem \ref{confusable}, we can have $\bp$ and $\bq$ are confusable.
		Since $\ba \xRightarrow[3_d]{*} \ba'$, $\bb \xRightarrow[3_d]{*} \bb'$, and  $R_{\le 2}(\ba')=R_{\le 2}(\bb')$, we can also conclude by Theorem \ref{confusable} that $\ba$ and $\bb$ are confusable.
		
		Suppose that ${\bp}$ and ${\bq}$ are confusable, and $\ba$ and $\bb$ are confusable. Hence ${\bp}\ba$ and ${\bq}\bb$ are confusable. Note that ${\bp}\ba$ is a descendant of ${\bx}$ and ${\bq}\bb$ is a descendant of ${\by}$, hence ${\bx}$ and ${\by}$ are also confusable.
	\end{proof}
	
	We can efficiently determine whether the prefixes $\bp$ and $\bq$ are confusable. 
	When the substring ${\rm main}(\br)$ appears the same number of times in $R_{\le 2}(\bp)$ as in $R_{\le 2}(\bq)$, 
	we may assume that $R_{\le 2}(\bp)=\bw(abc)^mab=R_{\le 2}(\bq)$.
	Hence, ${\bp}$ and ${\bq}$ are confusable by Proposition \ref{prop:jain2}.
	 In particular, we have Corollary \ref{cor:samecount}.  
	
	Consider $\bz=abc$, where $a$, $b$, $c$ are three distinct symbols.
Let ${\rm Count}({\bz},{\bx})$ denote the number of times ${\bz}$ appears as a substring in ${\bx}$,
and ${\rm Count}({\rm rot}(\bz),{\bx})$ denote the number of times $abc$, $bca$, or $cab$ appears as a substring in ${\bx}$.

\begin{algorithm}[h!]\footnotesize
\caption{{\sc Confuse}($\bx,\by$)}
\label{alg:confuse}
\begin{algorithmic}[1]
\small
\Require $\bx, \by \in \Sigma_q^*$
\Ensure {\tt true} if ${\bx}$ and ${\by}$ are confusable, {\tt false} otherwise. 
\State $\br\gets R_{\le 3}({\bx})$
\State $\br'\gets R_{\le 3}({\by})$

 \If{$\bf r \neq \bf r'$} 
 \State \Return{{\tt false}} 
\Else 
\State \Return{{\sc Confuse-Recursion}($\bx,\by,\br$)}
 \EndIf
\Statex
\Procedure {Confuse-Recursion}{$\bx$,$\by$,$\br$}
 
 \If{$\br$ has at most two distinct symbols}
\State \Return {\tt true}
\Else
\State{Compute ${\rm main}(\br)$ and ${\rm Reg}(\br)$}
\State{$\bp\gets {\rm Ext}({\rm Reg}(\br),\bx)$, \hspace{20mm}$\bq\gets {\rm Ext}({\rm Reg}(\br),\by)$}
\State{$\ba\gets \bx\setminus{\rm *Pref}(\br,\bx)$, \hspace{19mm} $\bb\gets \by\setminus{\rm *Pref}(\br,\by)$}
\State{$C_\bp\gets {\rm Count}({\rm main}({\br}),R_{\le_2}(,\bp))$,  \hspace{2mm}
$C_\bq\gets {\rm Count}({\rm main}({\br}),R_{\le_2}(\bq))$}
\State{$\br^*\gets \br\setminus \bw(abc)^\ell$, where ${\rm Reg}(\br)=\bw(abc)^\ell ab$}
\If{$C_\bp=C_\bq$}
\State 
\Return {{\sc Confuse-Recursion}($\ba,\bb,\br^*$)}
\ElsIf {$C_\bp<C_\bq$ and ${\rm Count}({\rm rot}({\rm main}({\br})),\bp)>0$}
\State 
\Return {{\sc Confuse-Recursion}($\ba,\bb,\br^*$)}
\ElsIf {$C_\bp>C_\bq$ and ${\rm Count}({\rm rot}({\rm main}({\br})),\bq)>0$}
\State
\Return {{\sc Confuse-Recursion}($\ba,\bb,\br^*$)}
\Else
\State \Return {\tt false}
\EndIf
 \EndIf
 \EndProcedure
\end{algorithmic}
\end{algorithm}

	\begin{corollary}\label{cor:samecount}
		Suppose that ${\bx}$ and ${\by}$ are two words such that ${\br}=R_{\le 3}({\bx})=R_{\le3}({\by})$ and
		$\br$ contains at least three distinct symbols.
		Set 
		$\bp={\rm Ext}({\rm Reg}({\br}),{\bx})$ and $\bq={\rm Ext}({\rm Reg}({\br}),{\by})$.
		Suppose further that 
		${\rm Count}({\rm main}({\br}),R_{\le2}(\bp))={\rm Count}({\rm main}({\br}),R_{\le 2}(\bq))$.
		Then ${\bx}$ and ${\by}$ are confusable if and only if 
		${\bx}\setminus{\rm *Pref}({\br},{\bx})$ and ${\by}\setminus{\rm *Pref}({\br},{\by})$ are confusable.
	\end{corollary}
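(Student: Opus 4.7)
The plan is to invoke Theorem~\ref{thm:main} to reduce the claim to showing that, under the count hypothesis, the prefixes $\bp$ and $\bq$ are already confusable; then the whole corollary collapses to the confusability of $\ba := \bx\setminus{\rm *Pref}(\br,\bx)$ and $\bb := \by\setminus{\rm *Pref}(\br,\by)$. Since by Proposition~\ref{prop:jain2}(ii) two words are ${\le}2$-confusable (and hence ${\le}3$-confusable) iff they share the same ${\le}2$-root, it suffices to prove $R_{\le 2}(\bp) = R_{\le 2}(\bq)$.

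Writing ${\rm Reg}(\br) = \bw(abc)^\ell ab$ as in Lemma~\ref{lem:reg}, I would recall the structural fact already established inside the proof of Theorem~\ref{thm:main}: applying Lemma~\ref{lem:ordering} to ${\rm Reg}(\br)\xRightarrow[\le 3]{*}\bp$ and to ${\rm Reg}(\br)\xRightarrow[\le 3]{*}\bq$ forces
\[
R_{\le 2}(\bp) = \bw(abc)^{m}ab \quad \text{and} \quad R_{\le 2}(\bq) = \bw(abc)^{j}ab
\]
for some $m,j \ge \ell$. Consequently $\bp$ and $\bq$ share the same ${\le}2$-root iff $m = j$.

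The heart of the argument is then to show that the count of ${\rm main}(\br)$ as a substring of $\bw(abc)^{m}ab$ is a strictly increasing function of $m$; concretely, it equals $m + c_{0}$ for a constant $c_{0}$ depending only on $\bw$ and ${\rm main}(\br)$. The point is that since ${\rm main}(\br)$ is the first length-three substring of $\br$ with three distinct symbols, it must be a cyclic rotation of $abc$; inside the periodic block $(abc)^{m}$ each of the $m$ periods contributes exactly one occurrence of ${\rm main}(\br)$, while the fixed boundary pieces $\bw$ (of length at most three) and the trailing $ab$ alter the total only by a constant independent of $m$. Combining this with the previous paragraph, the hypothesis ${\rm Count}({\rm main}(\br), R_{\le 2}(\bp)) = {\rm Count}({\rm main}(\br), R_{\le 2}(\bq))$ forces $m = j$, whence $R_{\le 2}(\bp) = R_{\le 2}(\bq)$ and $\bp,\bq$ are confusable.

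The main obstacle is the routine but fiddly case analysis needed to certify the count formula, because $\bw$ can have length $0$, $1$, $2$, or $3$, the parameter $\ell$ can be $0$ or $1$, and ${\rm main}(\br)$ may be any of the three rotations of $abc$. I would organize the verification by enumerating the cases spelled out in the two definitions of ${\rm Reg}(\br)$ (split according to whether $r_1 = r_3$), and in each case checking directly that the additional overlap of ${\rm main}(\br)$ with the prefix $\bw$ and with the suffix $ab$ is constant in $m$. Once these finitely many boundary checks are done, strict monotonicity of the count in $m$ is immediate and the corollary follows.
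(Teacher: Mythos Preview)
Your proposal is correct and follows essentially the same approach as the paper: reduce via Theorem~\ref{thm:main} to showing that the count hypothesis forces $R_{\le 2}(\bp)=R_{\le 2}(\bq)$, and then invoke Proposition~\ref{prop:jain2}. The paper simply asserts that equal counts give $R_{\le 2}(\bp)=\bw(abc)^m ab=R_{\le 2}(\bq)$ in the paragraph preceding the corollary, whereas you spell out why the count is strictly monotone in $m$; your added case analysis (verifying that ${\rm main}(\br)$ is always a cyclic rotation of $abc$ and that each new period contributes exactly one occurrence while the boundary contribution from $\bw$ and the trailing $ab$ is constant) is exactly the justification the paper leaves implicit.
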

	
	On the other hand, when the number of appearances is not equal, we may modify the proof
	to obtain the following corollary. 
	A detailed proof of the corollary is deferred to Appendix \ref{app:cor1}.
	
	\begin{corollary}\label{cor:main}
		Suppose that ${\bx}$ and ${\by}$ are two words such that ${\br}=R_{\le 3}({\bx})=R_{\le3}({\by})$ and
		$\br$ contains at least three distinct symbols.
		Set 
		$\bp={\rm Ext}({\rm Reg}({\br}),{\bx})$ and $\bq={\rm Ext}({\rm Reg}({\br}),{\by})$.
		Suppose further that 
		${\rm Count}({\rm main}({\br}),R_{\le2}(\bp))<{\rm Count}({\rm main}({\br}),R_{\le 2}(\bq))$.
		Then ${\bx}$ and ${\by}$ are confusable if and only if 
		${\rm Count}({\rm rot}({\rm main}({\br})),\bp)>0$ and ${\bx}\setminus{\rm *Pref}({\br},{\bx})$ and ${\by}\setminus{\rm *Pref}({\br},{\by})$ are confusable.
	\end{corollary}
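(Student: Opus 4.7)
The plan is to invoke Theorem \ref{thm:main}, which reduces confusability of $\bx$ and $\by$ to the conjunction of confusability of $\bp,\bq$ and of $\ba,\bb$. Since $\ba,\bb$ confusability already appears in the corollary's statement, the task reduces to showing that, under the hypothesis $m<j$ (where $R_{\le 2}(\bp)=\bw(abc)^mab$ and $R_{\le 2}(\bq)=\bw(abc)^jab$ as in the proof of Theorem \ref{thm:main}), the prefixes $\bp$ and $\bq$ are confusable if and only if ${\rm Count}({\rm rot}({\rm main}(\br)),\bp)>0$. A key setup observation is that ${\rm Reg}(\br)$ is over $\{a,b,c\}$ by Lemma \ref{lem:reg}, so $\bp,\bq$ and all their $\le 3$-descendants are over $\{a,b,c\}$; consequently, any $3_d$-duplication in such a word must duplicate three consecutive distinct symbols, and a short inductive check along any chain from ${\rm Reg}(\br)$ shows that such a triplet must be one of the rotations $abc,bca,cab$ of ${\rm main}(\br)$.

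For the forward direction, suppose $\bp,\bq$ are confusable, so by Theorem \ref{confusable} there exist $\bp',\bq'$ with $\bp\xRightarrow[3_d]{*}\bp'$, $\bq\xRightarrow[3_d]{*}\bq'$, and $R_{\le 2}(\bp')=R_{\le 2}(\bq')$. Arguing as in the proof of Theorem \ref{thm:main}, these roots have the form $\bw(abc)^{m'}ab$ and $\bw(abc)^{j'}ab$ with $m'\ge m$ and $j'\ge j$; equality forces $m'=j'\ge j>m$, so at least one $3_d$-duplication was performed in going from $\bp$ to $\bp'$. By the setup observation, $\bp$ therefore contains a rotation of ${\rm main}(\br)$ as a substring, which is the desired conclusion.

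For the reverse direction, assume ${\rm Count}({\rm rot}({\rm main}(\br)),\bp)>0$ and $\ba,\bb$ confusable. It suffices to show $\bp,\bq$ are confusable. Fix an occurrence in $\bp$ of a rotation $\alpha\in\{abc,bca,cab\}$ and write $\bp=\bs_1\alpha\bs_2$. Define $\bp'=\bs_1\alpha^{j-m+1}\bs_2$ by duplicating $\alpha$ at this position $j-m$ times, so that $\bp\xRightarrow[3_d]{*}\bp'$. Setting $\bq'=\bq$, the claim is that $R_{\le 2}(\bp')=\bw(abc)^jab=R_{\le 2}(\bq')$; this, by Theorem \ref{confusable}, yields confusability of $\bp$ and $\bq$, and hence, combined with $\ba,\bb$ confusability and Theorem \ref{thm:main}, the confusability of $\bx$ and $\by$.

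The main obstacle is the claim that each successive duplication of $\alpha$ increments the $abc$-count of the $\le 2$-root by exactly one. The plan is an induction on the number of duplications. For the single-step case from $\bp$ to $\bp_1=\bs_1\alpha\alpha\bs_2$, Lemma \ref{lem:ordering} applied to the chain $R_{\le 2}(\bp)\xRightarrow[\le 2]{*}\bp\xRightarrow[3_d]{*}\bp_1$ yields $R_{\le 2}(\bp)\xRightarrow[3_d]{*}R_{\le 2}(\bp_1)$, and Lemma \ref{lem:reorder} preserves the count of $3_d$-duplications (exactly one here). A short direct check then shows that any $3_d$-duplication applied to $\bw(abc)^mab$---necessarily of a rotation of $abc$---produces $\bw(abc)^{m+1}ab$, so $R_{\le 2}(\bp_1)=\bw(abc)^{m+1}ab$. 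Iterating $j-m$ times completes the proof.
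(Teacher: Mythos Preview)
Your proof is correct and follows the same approach as the paper: reduce via Theorem~\ref{thm:main} to the equivalence of $\bp,\bq$-confusability with ${\rm Count}({\rm rot}({\rm main}(\br)),\bp)>0$, then establish each direction using Theorem~\ref{confusable}. You supply more care than the paper in justifying $R_{\le 2}(\bp')=\bw(abc)^jab$ after the repeated duplications; the paper simply asserts this, whereas your inductive step via Lemma~\ref{lem:reorder} (whose case analysis indeed shows that a $3_d$-step is never broken up when swapped past a shorter one) is a valid way to fill that gap.
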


\subsection{Confusability Algorithm}\label{sec:implementation}

Based on the ideas in the previous subsection,
we formally describe our algorithm {\sc Confuse} in Algorithm \ref{alg:confuse}  
and demonstrate that the running time is linear.
In the following analysis, we assume the definitions in Algorithm \ref{alg:confuse}.

\begin{itemize}
\setlength\itemsep{2mm}
\item \underline{${\rm main}(\br)$ and ${\rm Reg}(\br)$ can be computed in constant time}. This is clear from definition.

\item \underline{The prefixes $\bp$ and $\bq$ can be computed in time linear in $|\bp|$ and $|\bq|$, respectively}.
From Proposition \ref{prop:regular}, we have that $D^*_{\le 3}({\rm Reg}(\br))$
is a regular language. 
Since the length of ${\rm Reg}(\br)$ is at most six, the finite automaton generating the language has a constant number of vertices. 
Hence, the longest prefixes of $\bx$ and $\by$ that are accepted by the finite automaton are precisely $\bp$ and $\bq$, respectively.
In Appendix \ref{app:prefixes}, we show that the sum of the lengths of all prefixes that are computed in all recursive calls is at most $3(|\bx|+|\by|)$. 
Therefore, the computation of all prefixes takes linear time.

\item \underline{In the recursive calls, the root $\br^*$ can be computed in constant time}. As mentioned earlier, the first step is to compute the roots of $\bx$ and $\by$ in linear time. 
When the roots are equal to $\br$ and certain conditions are met, 
we make the recursive call {\sc Confuse-Recursion}. 
If the roots are recomputed for the shorter words $\ba$ and $\bb$, then the overall running time for {\sc Confuse} is quadratic.
To avoid this, we observe that the roots for the shorter words are always equal and
in fact, the common root $\br^*$ of $\ba$ and $\bb$ may be easily inferred from $\br$.
More precisely, if ${\rm Reg}(\br)=\bw(abc)^\ell ab$, then $\br^*= \br\setminus \bw(abc)^\ell$.

\item \underline{In the recursive calls, determining $\br$ has at most two symbols can be done in constant time}.
If $\br$ has length at least four, then $\br$ necessarily has at least three distinct symbols.
Otherwise, it contradicts the fact that $\br$ is irreducible. 
Therefore, it suffices to check if the first four symbols $\br$ contain three distinct symbols.
\end{itemize}

In summary, from the above observations, 
we conclude that the running time is $O(\max\{|\bx|,|\by|\})$.

\section{${\le}3$-Tandem-Duplication Codes}\label{sec:codes}

We use insights gained from the previous section to construct  $(n,{\le}3;q)$-TD codes. Motivated by the concept of roots, we consider a ${\le}3$-irreducible word $\br$
and we say that a $(n,{\le}3;q)$-TD code $\cal C$ is an $(n,{\le}3;\br)$-TD code if 
all words in $\cal C$ belong to $D_{\le 3}^*(\br)$.

For $\br\in{\rm Irr}_{\le3}(i,q)$ with $i\le n$, 
suppose that ${\cal C}(n,\br)$ is an $(n,{\le}3;\br)$-TD code.
Then Proposition \ref{prop:jain2} implies that 
$\bigcup_{\br\in{\rm Irr}_{\le3}(i,q),\, i\le n}{\cal C}(n,\br)$ is an $(n,{\le}3;q)$-TD code.
Trivially, $\{\xi_{n-i}({\br})\}$  is an $(n,{\le}3;\br)$-TD code for all ${\bx} \in {\rm Irr}_{\le 3}(i,q)$.
Taking the union of these codes, we recover Construction \ref{code:jain}.

Therefore, in the rest of this section, our objective is 
to provide estimates on the size of an optimal $(n,{\le}3;\br)$-TD code for fixed ${\le}3$-irreducible word $\br$.
To simplify our discussion, we focus on the case where $q=3$ and let 
$T(n)$ and $T(n,\br)$ to denote the sizes of an optimal $(n,{\le}3;3)$-TD code
and an optimal $(n,{\le}3;\br)$-TD code, respectively.

Pick $\bx\in D_{{\le}3}^*(\br)$. Using concepts from Section \ref{sec:confusable},
we define the following combinatorial characterisation of $\bx$. 
Set $\bx_1=\bx$ and $\br_1=\br$.
For $i\ge 2$, set
$\bx_i=\bx_{i-1}\setminus *{\rm Pref}(\br_{i-1},\bx_{i-1}) \mbox{ and } \br_i= R_{{\le}3}(\bx_i).$
We terminate this recursion when $\br_{m+1}$ has less than three distinct symbols.
If $\br_{m}$ is the last root to have three distinct symbols, we say that $\br$ has $m$ {\em regions} and
for $1\le i\le m$, define
\[ \bp_i={\rm Ext}({\rm Reg}(\br_i),\bx_i),\, \bt_i={\rm main}(\br_i),\, c_i={\rm Count}(\bt_i,R_{{\le}2}(\bp_i)), \mbox{ and }
\delta_i=
\begin{cases}
+, &\mbox{if }{\rm Count}({\rm rot}(\bt_i),\bp_i)>0,\\
-, &\mbox{otherwise.}
\end{cases}\]

For the word $\bx$, we define its {\em label} by
${\rm Label}(\bx)=(\br,(c_1,\delta_1),(c_2,\delta_2),\ldots,(c_m,\delta_m))$.

\begin{example}
Consider $\br=01210$. Then $\br$ has two regions with $\bt_1=012$ and $\bt_2=210$.
Consider also the words $01210210$, $01201210$, and $01111210$ that belongs to $D_{{\le}3}^*(01210)$.
Then their labels are as follow.
\begin{align*}
{\rm Label}(01210210)&=(01210, (1,+),(2,+)),\\
{\rm Label}(01201210)&=(01210, (2,+),(1,+)),\\
{\rm Label}(01112110)&=(01210, (1,-),(1,-)).
\end{align*}
 Then it follows from Theorem \ref{thm:main}, Corollaries \ref{cor:samecount} and \ref{cor:main} that 
 $01210210$ and $01210210$ are confusable, while
 $01210210$ and $01112110$ are not confusable.
\end{example}

More generally, the following proposition is immediate from 
Theorem \ref{thm:main}, Corollaries \ref{cor:samecount} and \ref{cor:main}.

\begin{proposition}\label{prop:label}
Let $\bx$ and $\by$ belong to $D_{{\le}3}^*(\br)$.
Suppose that 
\begin{align*}
{\rm Label}(\bx)&\triangleq\left(\br, \left(c_1^{(\bx)},\delta_1^{(\bx)}\right), \left(c_2^{(\bx)},\delta_2^{(\bx)}\right), \ldots, \left(c_m^{(\bx)},\delta_m^{(\bx)}\right)\right),\\
{\rm Label}(\by)&\triangleq\left(\br, \left(c_1^{(\by)},\delta_1^{(\by)}\right), \left(c_2^{(\by)},\delta_2^{(\by)}\right), \ldots, \left(c_m^{(\by)},\delta_m^{(\by)}\right)\right).
\end{align*}
Then $\bx$ and $\by$ are not ${\le}3$-confusable if and only if
for some $1\le i\le m$, 
\[ \left(c_i^{(\bx)}<c_i^{(\by)} \mbox{ and } \delta_i^{(\bx)}=-\right)
\mbox{ or }
\left(c_i^{(\bx)}>c_i^{(\by)} \mbox{ and } \delta_i^{(\by)}=-\right).\]
\end{proposition}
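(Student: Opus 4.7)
The plan is to proceed by induction on $m$, the number of regions of the common root $\br$. For the base case $m=0$, the root $\br$ has fewer than three distinct symbols, so by Lemma~\ref{lem:ordering} every word in $D_{\le 3}^*(\br)$ already lies in $D_{\le 2}^*(\br)$. Thus $R_{\le 2}(\bx)=\br=R_{\le 2}(\by)$, and Proposition~\ref{prop:jain2}(ii) yields that $\bx$ and $\by$ are $\le 2$-confusable and hence $\le 3$-confusable. The labels carry no pairs, so the existential quantifier on the right-hand side is vacuous, and both sides agree (both saying that the words are confusable).

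For the inductive step, write ${\rm Reg}(\br)=\bw(abc)^\ell ab$ and set $\ba=\bx\setminus {\rm *Pref}(\br,\bx)$, $\bb=\by\setminus {\rm *Pref}(\br,\by)$, and $\br^{*}=\br\setminus \bw(abc)^\ell$. Using the observation from Section~\ref{sec:implementation} that $\br^{*}=R_{\le 3}(\ba)=R_{\le 3}(\bb)$, and unfolding the recursion that defines the label, one obtains
\begin{align*}
{\rm Label}(\ba)&=\bigl(\br^{*},\,(c_2^{(\bx)},\delta_2^{(\bx)}),\ldots,(c_m^{(\bx)},\delta_m^{(\bx)})\bigr),\\
{\rm Label}(\bb)&=\bigl(\br^{*},\,(c_2^{(\by)},\delta_2^{(\by)}),\ldots,(c_m^{(\by)},\delta_m^{(\by)})\bigr),
\end{align*}
and $\br^{*}$ has exactly $m-1$ regions. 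Now split on the comparison of $c_1^{(\bx)}$ and $c_1^{(\by)}$. When $c_1^{(\bx)}=c_1^{(\by)}$, Corollary~\ref{cor:samecount} reduces confusability of $(\bx,\by)$ to that of $(\ba,\bb)$, and the index $i=1$ cannot trigger either disjunct. When $c_1^{(\bx)}<c_1^{(\by)}$, Corollary~\ref{cor:main} gives that $(\bx,\by)$ is confusable iff $\delta_1^{(\bx)}=+$ \emph{and} $(\ba,\bb)$ is confusable; equivalently, $(\bx,\by)$ fails to be confusable iff $\delta_1^{(\bx)}=-$ (so $i=1$ triggers the first disjunct) or the inductive hypothesis on $(\ba,\bb)$ yields some triggering $i\ge 2$. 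The case $c_1^{(\bx)}>c_1^{(\by)}$ is symmetric via the second disjunct, and the induction closes.

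The main obstacle is establishing the label-alignment claim, namely that $R_{\le 3}(\ba)=\br^{*}$ together with the matching of the tails of the labels. This is only stated informally in the algorithm analysis and benefits from a careful check using Lemma~\ref{lem:reg}: since the symbol in $\br$ immediately after ${\rm Reg}(\br)$ is not $c$, removing the prefix $\bw(abc)^\ell$ yields a word which begins at a position still compatible with ${\le}3$-irreducibility, and one verifies directly that this truncation of $\br$ is indeed the $\le 3$-root of $\ba$. Once this bookkeeping is in place, the remainder of the proof is a clean assembly of Theorem~\ref{thm:main} with Corollaries~\ref{cor:samecount} and~\ref{cor:main}, exactly as suggested by the ``it follows'' remark preceding the proposition.
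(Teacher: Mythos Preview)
Your proposal is correct and is precisely the explicit version of what the paper intends: the paper offers no written proof at all, merely stating that the proposition ``is immediate from Theorem~\ref{thm:main}, Corollaries~\ref{cor:samecount} and~\ref{cor:main}.'' Your induction on the number $m$ of regions, together with the case split on the comparison of $c_1^{(\bx)}$ and $c_1^{(\by)}$ that feeds directly into those two corollaries, is exactly the natural unfolding of that remark, and the label-alignment bookkeeping you flag is handled in the paper only at the informal level you cite from Section~\ref{sec:implementation}.
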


Immediate from Proposition \ref{prop:label} are certain sufficient conditions for two words to be confusable.

\begin{corollary}\label{cor:label}
Let $\bx, \by\in D_{{\le}3}^*(\br)$ and their labels ${\rm Label}(\bx)$ and ${\rm Label}(\by)$ 
be as defined in Proposition \ref{prop:label}.
\begin{enumerate}[(i)]
\item If $c_i^{(\bx)}=c_i^{(\by)}$ for all $i$, then $\bx$ and $\by$ are ${\le}3$-confusable.
\item If $\delta_i^{(\bx)}=\delta_i^{(\by)}=+$ for all $i$, then $\bx$ and $\by$ are ${\le}3$-confusable.
\end{enumerate}
\end{corollary}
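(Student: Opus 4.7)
The plan is to derive both parts as immediate contrapositives of Proposition \ref{prop:label}. That proposition gives a necessary and sufficient condition for $\bx$ and $\by$ to fail to be $\le 3$-confusable, phrased as the existence of some index $i$ for which either $c_i^{(\bx)}<c_i^{(\by)}$ together with $\delta_i^{(\bx)}=-$, or else $c_i^{(\bx)}>c_i^{(\by)}$ together with $\delta_i^{(\by)}=-$. To conclude that two words are confusable it therefore suffices to show that neither disjunct holds at any $i$.

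For part (i), I would assume $c_i^{(\bx)}=c_i^{(\by)}$ for every $i$. Then for each fixed $i$ both of the strict inequalities $c_i^{(\bx)}<c_i^{(\by)}$ and $c_i^{(\bx)}>c_i^{(\by)}$ fail, so the two clauses in the characterisation of non-confusability from Proposition \ref{prop:label} are vacuously false regardless of the signs $\delta_i^{(\bx)}$ and $\delta_i^{(\by)}$. Invoking the ``only if'' direction of Proposition \ref{prop:label} then gives that $\bx$ and $\by$ must be $\le 3$-confusable.

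For part (ii), I would assume $\delta_i^{(\bx)}=\delta_i^{(\by)}=+$ for every $i$. Now the count conditions $c_i^{(\bx)}<c_i^{(\by)}$ or $c_i^{(\bx)}>c_i^{(\by)}$ may well hold at some index, but the sign side of each disjunct fails: the first disjunct requires $\delta_i^{(\bx)}=-$ and the second requires $\delta_i^{(\by)}=-$, and by hypothesis both signs are $+$. Hence at no index $i$ is the non-confusability condition satisfied, and another appeal to Proposition \ref{prop:label} yields that $\bx$ and $\by$ are $\le 3$-confusable.

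Since both parts reduce to direct logical negation of the characterisation, there is essentially no obstacle here beyond carefully matching the two clauses of Proposition \ref{prop:label} against the hypotheses; the argument is a one-line contrapositive in each case. The only point requiring a moment of care is to note that the two labels share the same root $\br$ and the same number $m$ of regions (as built into the definition of $D_{\le 3}^*(\br)$ and the label), so that the indices $i$ in the two labels range over exactly the same set and the comparisons $c_i^{(\bx)}$ versus $c_i^{(\by)}$ are meaningful.
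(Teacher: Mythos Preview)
Your proposal is correct and matches the paper's approach exactly: the paper simply states that the corollary is ``immediate from Proposition~\ref{prop:label}'' without spelling out the contrapositive, and your argument is precisely that one-line contrapositive applied to each part.
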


From Corollary \ref{cor:label}, we use the number of integer solutions to certain equations as 
an upper bound for $T(n,\br)$. As this combinatorial argument is fairly technical, 
we state the upper bound and defer the proof to Appendix \ref{app:upper}.

\begin{proposition}\label{prop:upper}
Let $i\le n$. Suppose that $\br\in {\rm Irr}_{{\le}3}(i,3)$ has $m$ regions. 
Then 
\[ T(n,\br)\le U(n,i,m)\triangleq
\begin{cases}
\binom{(n-i)/3+m}{m}-\binom{(n-i)/3+m-1}{m-1}+1, & \mbox{if 3 divides $n-i$},\\
\binom{\floor{(n-i)/3}+m}{m}, &\mbox{otherwise.}
\end{cases}
\]
\end{proposition}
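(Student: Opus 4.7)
The plan is to exploit the label machinery of Proposition~\ref{prop:label} to translate the bound on $|\cC|$ into a counting problem on $c$-vectors. Let $\cC$ be an $(n,{\le}3;\br)$-TD code. For each $\bx\in\cC$, consider its label $\big(\br,(c_1^{(\bx)},\delta_1^{(\bx)}),\ldots,(c_m^{(\bx)},\delta_m^{(\bx)})\big)$. By Corollary~\ref{cor:label}(i), any two distinct codewords must carry distinct $c$-vectors $(c_1,\ldots,c_m)$, so $|\cC|$ is bounded by the number of admissible $c$-vectors, possibly refined by Corollary~\ref{cor:label}(ii).

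Next, I would characterize the admissible $c$-vectors. By Lemma~\ref{lem:ordering} together with Lemma~\ref{lem:distinct}, any $\bx\in D_{\le 3}^*(\br)$ of length $n$ arises from $\br$ via $k_3$ length-three, $k_2$ length-two, and $k_1$ length-one duplications of distinct symbols, with $n-i=3k_3+2k_2+k_1$. Length-one and length-two duplications do not alter any $R_{\le 2}(\bp_i)$, while each length-three distinct-symbol duplication lies inside exactly one region (the tail has root with fewer than three distinct symbols, so it admits no length-three distinct-symbol duplication) and increases the corresponding $c_i$ by one. Writing $a_i=c_i-c_i^{\min}$, where $c_i^{\min}\ge 1$ is the contribution of the root itself, yields $\sum_i a_i = k_3 \le \lfloor (n-i)/3\rfloor$. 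The number of nonnegative integer vectors $(a_1,\ldots,a_m)$ with $\sum_i a_i \le \lfloor (n-i)/3\rfloor$ equals $\binom{\lfloor (n-i)/3\rfloor+m}{m}$, delivering the bound in the case $3\nmid n-i$.

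For the refined case $3\mid n-i$, set $N=(n-i)/3$. Equality $\sum_i a_i = N$ forces $k_2=k_1=0$, so every duplication is length-three on distinct symbols. In this situation each $\bp_i$ remains of the form $\bw(abc)^{c_i}ab$ with $c_i\ge 1$, and ${\rm main}(\br_i)$ is a cyclic rotation of $abc$; a direct substring count in $(abc)^{c_i}$ then gives ${\rm Count}({\rm rot}({\rm main}(\br_i)),\bp_i)\ge 3c_i-2\ge 1$, hence $\delta_i=+$ for every $i$. Corollary~\ref{cor:label}(ii) therefore forces at most one codeword to have label with $\sum_i a_i = N$. Combined with the $\binom{N+m}{m}-\binom{N+m-1}{m-1}$ admissible $c$-vectors satisfying $\sum_i a_i < N$, this yields $\binom{N+m}{m}-\binom{N+m-1}{m-1}+1$.

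The main obstacle will be the bookkeeping in the second paragraph: verifying that each length-three distinct-symbol duplication is correctly attributed to a unique region and increments exactly one $c_i$, and that no such duplication occurs in the tail $\bx_{m+1}$. A secondary technical point is establishing $c_i^{\min}\ge 1$, needed for the $\delta_i=+$ conclusion; this follows because ${\rm Reg}(\br_i)$, being a prefix of the $\le 3$-irreducible word $\br_i$, is itself $\le 2$-irreducible, so $R_{\le 2}({\rm Reg}(\br_i))={\rm Reg}(\br_i)$ already contains ${\rm main}(\br_i)$ by construction.
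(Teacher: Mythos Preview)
Your proposal is correct and follows essentially the same approach as the paper. The paper organizes the argument via two short lemmas---first that $i+3\sum_j(c_j-1)\le n$ (so $\sum_j(c_j-1)\le\lfloor(n-i)/3\rfloor$), and second that equality forces $\delta_j=+$ for all $j$---and then applies Corollary~\ref{cor:label}(i) and (ii) exactly as you do; your decomposition $n-i=3k_3+2k_2+k_1$ with $k_3=\sum_j(c_j-1)$ is just an explicit version of their first lemma.

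One minor imprecision: in the refined case you write that each $\bp_i$ is ``of the form $\bw(abc)^{c_i}ab$,'' but the exponent guaranteed by Lemma~\ref{lem:reg} is some $M$, and $c_i$ is the count of ${\rm main}(\br_i)$ in that word, which need not literally equal $M$. This does not affect your conclusion, since the simpler observation that $\bp_i=R_{\le 2}(\bp_i)$ (because $k_1=k_2=0$) already gives ${\rm Count}({\rm main}(\br_i),\bp_i)=c_i\ge 1$, and ${\rm main}(\br_i)\in{\rm rot}({\rm main}(\br_i))$, so $\delta_i=+$ follows without the $3c_i-2$ count.
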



Immediate from Proposition \ref{prop:upper} is that $T(|\br|,\br)=T(|\br|+1,\br)=T(|\br|+2,\br)=1$.
Next, we provide lower bounds for $T(n,\br)$ by constructing $(n,{\le}3;\br)$-TD codes.
We first show that there exists an $(n,{\le}3;\br)$-TD code of size two. 

\begin{construction}\label{code:2}
Let $\br\in{\rm Irr}_{\le 3}(i,3)$ for some $i\ge 4$. Set
\[{\cal C}_{2}\triangleq 
\begin{cases}
\{r_1r_2r_3r_1r_2r_3r_4\cdots r_i,\hspace{4mm} r_1r_2r_2r_3r_3r_4r_4\cdots r_i\},
&\mbox{if $r_1\ne r_3$,}\\
\{r_1r_2r_1r_4r_2r_1r_4r_5\cdots r_i,\hspace{1mm} r_1r_2r_1r_1r_4r_4r_5r_5\cdots r_i\},
&\mbox{if $r_1= r_3$ and $i\ge 5$}\\
\{r_1r_2r_1r_4r_2r_1r_4,\hspace{12mm} r_1r_2r_1r_1r_4r_4r_4\},
&\mbox{if $r_1= r_3$ and $i=4$}\\
\end{cases}
\]
Then ${\cal C}_{2}$ is an $(i+3,{\le}3;\br)$-TD code of size two. 
By Proposition \ref{prop:upper}, $\C_2$ is optimal.
\end{construction}

\begin{proof}Let ${\cal C}_2=\{\bx, \by\}$ and the labels of the words be as given in Proposition \ref{prop:label}. 
Observe that in all cases, 
$\left(c_1^{(\bx)},\delta_1^{(x)}\right)=(2,+)$ and $\left(c_1^{(\by)},\delta_1^{(y)}\right)=(1,-)$.
Proposition \ref{prop:label} then implies that $\bx$ and $\by$ are not confusable.
\end{proof}

Following the above construction, we have a improvement to Construction \ref{code:jain}.

\begin{corollary}For $n\ge 6$, we have that
\[T(n)\ge \sum_{\substack{1\le i\le 3, {\rm~or}\\ n-2\le i\le n}} {\rm Irr}_{\le 3}(i,3)
+2\sum_{i=4}^{n-3}{\rm Irr}_{\le 3}(i,3).\]
\end{corollary}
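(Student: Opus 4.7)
The plan is to exhibit an explicit $(n,\le 3;3)$-TD code of the claimed size. By Proposition \ref{prop:jain2}(iii), words with distinct ${\le}3$-roots are never confusable, so I would build an $(n,\le 3;\br)$-TD subcode ${\cal C}(n,\br)$ separately for each $\br \in {\rm Irr}_{\le 3}(i,3)$ with $i\le n$ and take their union. The target is to achieve $|{\cal C}(n,\br)|=1$ for $i\in\{1,2,3\}\cup\{n-2,n-1,n\}$ and $|{\cal C}(n,\br)|=2$ for $4\le i\le n-3$, whose total matches the displayed expression.

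For the regimes where the claimed subcode size is one, I would take ${\cal C}(n,\br)=\{\xi_{n-i}(\br)\}$, the single-codeword subcode already used in Construction \ref{code:jain}. For the middle regime $4\le i\le n-3$, the inequality $i+3\le n$ lets me apply Construction \ref{code:2} to $\br$ and obtain a pair $\{\bx,\by\}\subset D^*_{\le 3}(\br)$ of length $i+3$ that is not ${\le}3$-confusable. I would then pad each of $\bx$ and $\by$ to length $n$ by appending $n-i-3$ copies of the last symbol $z$ of $\br$. A direct inspection of the three cases of Construction \ref{code:2} confirms that both codewords end with $z$, so the padded words $\xi_{n-i-3}(\bx)$ and $\xi_{n-i-3}(\by)$ are well defined, have length $n$, and still lie in $D^*_{\le 3}(\br)$ since length-one duplications of $z$ are ${\le}3$-duplications.

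The main obstacle is confirming that padding preserves non-confusability, but this reduces to a short chaining argument: any hypothetical common ${\le}3$-descendant $\bz$ of $\xi_{n-i-3}(\bx)$ and $\xi_{n-i-3}(\by)$ would yield $\bx\xRightarrow[\le 3]{*}\bz$ and $\by\xRightarrow[\le 3]{*}\bz$ by composing the original construction-to-codeword reductions with the padding itself, contradicting the fact that $\{\bx,\by\}$ is a TD code by Construction \ref{code:2}. With this observation in hand, $\{\xi_{n-i-3}(\bx),\xi_{n-i-3}(\by)\}$ is a valid $(n,\le 3;\br)$-TD code of size two for every $\br$ in the middle range, and summing $|{\cal C}(n,\br)|$ over all irreducible roots of every admissible length produces exactly the stated lower bound on $T(n)$.
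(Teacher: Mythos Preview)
Your proposal is correct and follows the same approach the paper intends: the corollary in the paper is stated without proof as an immediate consequence of Construction~\ref{code:2}, and your argument spells out exactly the natural union-over-roots construction (one codeword per root in the edge ranges, two per root in the middle range via Construction~\ref{code:2}, padded to length $n$). Your chaining argument for why padding preserves non-confusability---that $\bx\xRightarrow[\le 3]{*}\xi_{n-i-3}(\bx)$, so a common descendant of the padded words is also a common descendant of $\bx$ and $\by$---is a clean and fully elementary way to handle that step, and is arguably more direct than invoking the label machinery of Proposition~\ref{prop:label}. One minor remark: your verification that both codewords end in the last symbol of $\br$ is correct but not actually needed, since $\xi_j$ pads with the last symbol of its argument regardless, and the chaining argument goes through for any padding symbol.
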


Proposition \ref{prop:label} suggests that we examine code constructions according to the number of regions of $\br$.
When $|\br|\le 2$, the number of regions is zero and hence $T(n,\br)=1$.
We consider the smallest nontrivial case where $\br$ has exactly one region.
Without loss of generality, assume that $\br\in R\triangleq\{012,0120,01201,1012,10120,101201$, $0121,01202,012010,10121,101202,1012010\}$, and so, $\bt_1=012$. 
In this case, we have the following construction whose proof is deferred to Appendix \ref{app:codes}.

\begin{construction}\label{code:oneregion}
For $\br\in R$ and $\ell\ge 1$, define the words $\bx(\br,\ell)$ and $\bz(\br,\ell)$ with the following rule.
\vspace{1mm}

\begin{center}
\begin{tabular}{|l|l|l||l|l|l|}
\hline
$\br$ & $\bx(\br,\ell)$ & $\bz(\br,\ell)$ &
$\br$ & $\bx(\br,\ell)$ & $\bz(\br,\ell)$ \\ \hline
$012$ & $0(112200)^{\ell-1} 112$ & $(012)^\ell$ &
$1012$ & $10(112200)^{\ell-1} 112$ & $1(012)^\ell$ \\ \hline
$0120$ & $0(112200)^{\ell-1} 11220$ & $(012)^\ell0$ &
$10120$ & $10(112200)^{\ell-1} 11220$ & $1(012)^\ell0$ \\ \hline
$01201$ & $0(112200)^{\ell-1} 1122001$ & $(012)^\ell01$ &
$101201$ & $10(112200)^{\ell-1} 1122001$ & $1(012)^\ell01$ \\ \hline
$0121$ & $0(112200)^{\ell-1} 1121$ & $(012)^\ell1$ &
$10121$ & $10(112200)^{\ell-1} 1121$ & $1(012)^\ell1$ \\ \hline
$01202$ & $0(112200)^{\ell-1} 112202$ & $(012)^\ell02$ &
$101202$ & $10(112200)^{\ell-1} 112202$ & $1(012)^\ell02$ \\ \hline
$012010$ & $0(112200)^{\ell-1} 11220010$ & $(012)^\ell010$ &
$1012010$ & $10(112200)^{\ell-1} 11220010$ & $1(012)^\ell010$ \\ \hline
\end{tabular}
\end{center}
%
For $\br\in R$ and $n\ge |\bx(\br,2)|$, set $\ell_z=\floor{(n-|\br|)/3}+1$ and
\[{\cal C}_{\br}(n)
\triangleq \left\{\xi_{n-|\bx(\br,\ell)|}(\bx(\br,\ell)):|\bx(\br,\ell)| \le n\right\}
\cup \left\{\xi_{n-|\bz(\br,\ell_z)|} (\bz(\br,\ell_z)) \right\}.
\]
Then ${\cal C}_{\br}(n)$ is an $(n,{\le}3;\br)$-TD code. 
Furthermore, ${\cal C}_{\br}(n)$ in optimal. 
Therefore, if we set $n_2=|\bx(r,2)|$,
we have
\[ T(\br,n) = 
\begin{cases} 
\floor{\frac{n-n_2}{6}}+3 &\mbox{if $n\ge n_2$},\\
2, &\mbox{if $|\br|+3\le n< n_2$},\\
1, &\mbox{otherwise.}
\end{cases}
\]
\end{construction}

Hence, the value of $T(n,\br)$ is completely determined whenever $\br$ has at most one region.
Combining this result with Proposition \ref{prop:upper}, we have the following theorem.

\begin{theorem}\label{thm:upper}
Let $I(i,m)$ denote the number of irreducible words in ${\rm Irr}_{{\le}3}(i,3)$ with exactly $m$ regions, and 
$U(n,i,m)$ be as defined in Proposition \ref{prop:upper}.
Then 
\begin{equation}
T(n)\le \sum_{\br \in R} T(n,\br) +\sum_{i=5}^n \sum_{m=2}^{i} I(i,m)U(n,i,m).\label{eq:upper}
\end{equation}
\end{theorem}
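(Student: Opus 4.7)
The plan is to exploit Proposition~\ref{prop:jain2}(iii): two words with distinct $\le 3$-roots are never $\le 3$-confusable. Consequently, any $(n,\le 3;3)$-TD code $\cC$ partitions as $\cC=\bigcup_{\br}\cC_{\br}$ indexed by the $\le 3$-irreducible roots $\br$ with $|\br|\le n$, where $\cC_{\br}=\{\bx\in\cC:R_{\le 3}(\bx)=\br\}$, and each $\cC_{\br}$ is itself an $(n,\le 3;\br)$-TD code. Summing cardinalities gives $|\cC|\le \sum_{\br} T(n,\br)$, and maximizing over $\cC$ yields $T(n)\le \sum_{\br} T(n,\br)$, reducing the task to bounding $T(n,\br)$ root class by root class.

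Next, I would partition the irreducible roots by their number of regions $m$, as defined immediately before Proposition~\ref{prop:label}. For $m=1$, every such $\br$ is (after relabeling the symbols so that $\bt_1=012$) one of the twelve representatives in $R$, and Construction~\ref{code:oneregion} gives $T(n,\br)$ exactly; these roots contribute the first summand $\sum_{\br\in R}T(n,\br)$ of~(\ref{eq:upper}). For $m\ge 2$, Proposition~\ref{prop:upper} supplies the uniform bound $T(n,\br)\le U(n,i,m)$ depending only on $n$, $i=|\br|$, and $m$; grouping the roots of common $(i,m)$ and multiplying by the count $I(i,m)$ produces the double sum $\sum_{i}\sum_{m\ge 2}I(i,m)U(n,i,m)$.

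To justify that the double sum may start at $i=5$, I would run the region recursion directly on $\bx=\br$: the special prefix $*{\rm Pref}(\br,\br)$ has length at least one, so $\bx_2=\br\setminus *{\rm Pref}(\br,\br)$ has length at most $|\br|-1$. A short case analysis for $|\br|\in\{3,4\}$ with three distinct symbols then shows that $\bx_2$ necessarily uses fewer than three distinct symbols; for instance when $|\br|=4$, irreducibility (no $aa$, no $abab$) combined with $r_1\ne r_4$ forces $r_4=r_2$, so the length-$3$ suffix $\bx_2=r_2r_3r_4$ has only two distinct symbols and cannot contribute a second region. Hence any root with $m\ge 2$ regions satisfies $|\br|\ge 5$.

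The main (mild) obstacle is purely bookkeeping over the $S_3$-action on $\Sigma_3$: one must ensure that $\sum_{\br\in R}$ is understood so that every 1-region root is counted exactly once across the relabelings of the twelve listed representatives, and that $0$-region roots (those using at most two distinct symbols) are accommodated in the same accounting convention, noting that for each such $\br$ one has $T(n,\br)=1$ because all $\le 3$-descendants of $\br$ are reachable by length-$\le 2$ duplications and are therefore pairwise $\le 2$-confusable by Proposition~\ref{prop:jain2}(ii). Once this identification is fixed, the decomposition together with the two class-wise bounds immediately produces~(\ref{eq:upper}).
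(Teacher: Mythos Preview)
Your proposal is correct and follows essentially the same approach as the paper. The paper states the theorem immediately after Construction~\ref{code:oneregion} with only the one-line justification ``Combining this result with Proposition~\ref{prop:upper}, we have the following theorem,'' relying on the decomposition $T(n)=\sum_{\br}T(n,\br)$ established at the start of Section~\ref{sec:codes}; your write-up simply unpacks the same three ingredients (root decomposition, exact values for $m\le 1$ via $R$, and the bound $U(n,i,m)$ for $m\ge 2$) and supplies the check that $m\ge 2$ forces $i\ge 5$, which the paper leaves implicit. Your closing caveat about the $S_3$-relabeling convention for $R$ and the accounting of $0$-region roots is well taken---the paper is indeed informal on this point---but it does not reflect a divergence in method.
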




In addition to the above constructions, 
we construct tandem-duplication codes for small lengths by searching for them exhaustively.
Specifically, fix $\br\in{\rm Irr}_{\le 3}(i,3)$ and $n\ge i$, we construct the graph ${\cal G}(n,\br)$,
whose vertices correspond to the set of all labels of descendants of $\br$ of length $n$, or 
$\{\bL: \bL={\rm Label}(\bx) \mbox{ and } \bx\in \Sigma_3^n\cap D^*_{\le 3}(\br)\}$.
Two vertices or labels $\bL_1$ and $\bL_2$ are connected in ${\cal G}(n,\br)$ if and only if 
the words whose labels are $\bL_1$ and $\bL_2$ are not ${\le}3$-confusable. 
Hence, a clique of size $M$ in the graph ${\cal G}(n,\br)$ 
correspond to an $(n,{\le}3;\br)$-TD code of size $M$ and 
$T(n,\br)$ is the maximum size of a clique in ${\cal G}(n,\br)$.


We use the exact algorithm {\tt MaxCliqueDyn} \cite{Konc} to 
determine the maximum size of the clique in these graph ${\cal G}(n,\br)$ for ${n\le 20}$.
Since $T(n)=\sum_{\br\in{\rm Irr}_{\le3}(i,3),\, i\le n}T(n,\br)$, 
we tabulate the results $T(n)$ in Table \ref{table:tn}.

\begin{remark}\hfill
\begin{itemize}
\item Such a method to compute $T(n,\br)$ is only possible because 
we have developed the algorithm to determine confusability in Section \ref{sec:confusable}.
Prior to this work, the necessary conditions for ${\le}3$-confusability was not known and 
hence, methods to compute $T(n,\br)$ were not available.
\item For a fixed length and root, even though the set of descendants is huge, the set of all labels are significantly smaller. Hence, the task of computing maximum cliques remains feasible. More concretely, when $n=20$,
the order of the largest graph ${\cal G}(20,\br)$ is {366}, 
despite the fact that the average size%
\footnote{The number of ${\le}3$-irreducible ternary words of length at most twenty is 27687.}
of $\Sigma_3^{20}\cap D^*_{\le 3}(\br)$ is $3^{20}/27687\approx 125935$.

\end{itemize}
\end{remark}

Finally, we develop recursive constructions in the following proposition,
whose proof is deferred to Appendix \ref{app:codes}.

\begin{proposition}\label{prop:recursive}
Let $\br=r_1r_2\cdots r_i\in{\rm Irr}_{\le 3}(i,3)$. Then the following holds.
\[
\small
T(n,\br)\ge
\begin{cases}
T(n-1,\br\setminus r_1), &
\mbox{if $r_1= r_3$},\\

\max\{2T(n-4,\br\setminus r_1), 3T(n-8,\br\setminus r_1)\}, & 
\mbox{if $r_1\ne r_3$, $r_1\ne r_4$},\\

\max\{2T(n-5,\br\setminus r_1r_2), 3T(n-10,\br\setminus r_1r_2)\}, &
\mbox{if $r_1\ne r_3$, $r_1=r_4$, $r_2\ne r_5$},\\

\max\{2T(n-6,\br\setminus r_1r_2r_3), 3T(n-12,\br\setminus r_1r_2r_3)\}, &
\mbox{if $r_1\ne r_3$, $r_1=r_4$, $r_2= r_5$}.\\
\end{cases}
\]
Furthermore, $T(n,\br)\ge T(n-1,\br)$ and $T(n,\br)=T(n,\br^R)$,
where $\bz^R$ denotes the reverse of word $\bz$. 
\end{proposition}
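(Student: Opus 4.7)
The plan is to prove the several claims of Proposition~\ref{prop:recursive} in turn, with Proposition~\ref{prop:label} as the main tool.

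I would first handle the two generic identities. For $T(n,\br)\ge T(n-1,\br)$, apply the map $\bx\mapsto\xi_1(\bx)$ (appending the last symbol) to every codeword of an optimal $(n-1,{\le}3;\br)$-TD code: each $\xi_1(\bx)\in D_{\le 3}^*(\br)$, since $\xi_1(\bx)$ is a length-$1$ tandem duplication of $\bx$, and any common descendant of $\xi_1(\bx)$ and $\xi_1(\by)$ would also be a common descendant of $\bx$ and $\by$, so non-confusability is preserved. For $T(n,\br)=T(n,\br^R)$, tandem duplication is reversal-symmetric—$T_{i,k}(\bx)^R=T_{|\bx|-i-k,k}(\bx^R)$—so $\bx\mapsto\bx^R$ gives a cardinality-preserving bijection between $(n,{\le}3;\br)$-TD codes and $(n,{\le}3;\br^R)$-TD codes.

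For Case~1 ($r_1=r_3$), take an optimal $(n-1,{\le}3;\br\setminus r_1)$-TD code $\C'$ and set $\C=\{r_1\bx':\bx'\in\C'\}$. Since $\br=r_1(\br\setminus r_1)$, each $r_1\bx'\in D_{\le 3}^*(\br)$. The key technical claim is that ${\rm Label}(r_1\bx')$ relative to $\br$ and ${\rm Label}(\bx')$ relative to $\br\setminus r_1$ have identical $(c_i,\delta_i)$-sequences; by Proposition~\ref{prop:label}, this transfers non-confusability from $\C'$ to $\C$. To establish the claim, I would verify the structural identities ${\rm Reg}(\br)=r_1\cdot{\rm Reg}(\br\setminus r_1)$ and $\br_2=(\br\setminus r_1)_2$ by inspecting the three sub-cases of the definition of ${\rm Reg}$; these imply $\br$ and $\br\setminus r_1$ have the same number of regions and agree beyond the first. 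The identity ${\rm Ext}({\rm Reg}(\br),r_1\bx')=r_1\cdot{\rm Ext}({\rm Reg}(\br\setminus r_1),\bx')$ would then follow by induction on the duplication sequence: any prefix of $r_1\bx'$ of length at least two starts with $r_1r_2$ (since $\bx'$, being a descendant of $\br\setminus r_1$, begins with $r_2$), and a prefix in $D_{\le 3}^*({\rm Reg}(\br))$ starting with $r_1r_2$ must factor as $r_1\bz''$ with $\bz''\in D_{\le 3}^*({\rm Reg}(\br\setminus r_1))$, because a position-$0$ duplication on ${\rm Reg}(\br)$ reinterprets as a position-$0$ duplication on the cyclic shift ${\rm Reg}(\br\setminus r_1)$.

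For Cases~2--4 ($r_1\ne r_3$), fix the shortened root $\br''\in\{\br\setminus r_1,\br\setminus r_1r_2,\br\setminus r_1r_2r_3\}$ (so $\br''=\br_2$, the root obtained after stripping the first region) and the prefix length $c\in\{4,5,6\}$ accordingly. From an optimal $(n-c,{\le}3;\br'')$-TD code $\C'$, I would construct two codewords per $\bx'\in\C'$ by prepending two length-$c$ prefixes designed to induce first-region labels $(2,+)$ and $(1,-)$. In Case~2 with $r_1\ne r_4$, these prefixes are $r_1r_2r_3r_1$ (producing $\bp_1=(r_1r_2r_3)^2$, hence $(c_1,\delta_1)=(2,+)$) and $r_1r_1r_2r_2$ (producing a $\bp_1$ whose ${\le}2$-root is $r_1r_2r_3$ and which contains no rotation of $r_1r_2r_3$, hence $(1,-)$); analogous prefixes work in the other sub-cases with longer ${\rm Reg}(\br)$. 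In every case the prepended prefix equals ${\rm *Pref}(\br,\bx)$ independently of $\bx'$, so $\bx\setminus{\rm *Pref}(\br,\bx)=\bx'$ and the labels $(c_i,\delta_i)$ for $i\ge 2$ transfer directly from $\bx'$. Proposition~\ref{prop:label} then yields pairwise non-confusability: same-first-label pairs inherit it from $\C'$, and cross pairs $(2,+)$ vs $(1,-)$ satisfy the criterion. For the factor-of-three bounds, I would prepend length-$2c$ prefixes inducing two successive first-region structures; the three label-pair choices $((2,+),(2,+))$, $((2,+),(1,-))$, $((1,-),(2,+))$ are pairwise non-confusable by Proposition~\ref{prop:label}.

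The main obstacle is a uniform, sub-case-by-sub-case verification that the prepended prefixes yield the claimed region labels for \emph{every} $\bx'\in\C'$. Since ${\rm Ext}({\rm Reg}(\br),\bx)$ can extend beyond the prepended prefix depending on the leading symbols of $\bx'$, one must show that any such extension changes neither the count $c_1$ (determined by $R_{\le 2}(\bp_1)$) nor the position of the last~$a$ in $\bp_1$ (which fixes $|{\rm *Pref}|$). This reduces to a small case analysis on whether $r_4=r_1$, whether $r_5=r_2$, and on the possible values of the first few symbols of $\bx'$; the explicit prefix designs for all sub-cases and the accompanying label computations form the bulk of the proof, which is what Appendix~\ref{app:codes} is presumably devoted to.
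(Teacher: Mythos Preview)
Your treatment of the two generic identities, of Case~1, and of the factor-of-two bounds in Cases~2--4 matches the paper's proof closely: prepend carefully chosen prefixes so that the first-region label becomes $(2,+)$ or $(1,-)$, then invoke Proposition~\ref{prop:label}. (Your prefix $r_1r_1r_2r_2$ for $(1,-)$ differs from the paper's $r_1r_2r_2r_2$, but either works.)

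The gap is in the factor-of-three part. Your suggestion of ``length-$2c$ prefixes inducing two successive first-region structures'' with label \emph{pairs} $((2,+),(2,+))$, $((2,+),(1,-))$, $((1,-),(2,+))$ does not make sense in this framework: the number of regions $m$ is determined by the root $\br$, and passing from $\br''=\br_2$ back to $\br$ adds exactly \emph{one} label coordinate, regardless of how long a prefix you prepend. A length-$2c$ prefix still produces a single pair $(c_1,\delta_1)$, not two. So you cannot realise three pairwise-nonconfusable two-coordinate patterns this way.

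What the paper does instead is to find three \emph{single} first-coordinate values that are already pairwise nonconfusable under Proposition~\ref{prop:label}, namely $(1,-)$, $(2,-)$, and $(3,+)$. Concretely, in Case~2 the three length-$8$ prefixes are
\[
r_1r_2r_2r_2r_2r_2r_2r_2,\qquad r_1r_2r_2r_3r_3r_1r_1r_2,\qquad r_1r_2r_3r_1r_2r_3r_1r_2,
\]
giving $(c_1,\delta_1)=(1,-),(2,-),(3,+)$ respectively; each pair is separated at coordinate~$1$ because the smaller count always carries $\delta=-$. The key design principle you missed is that with a budget of $2c$ extra symbols one can push $c_1$ up to~$3$ (or realise $c_1=2$ while still suppressing every distinct triplet to force $\delta_1=-$), which is what yields three mutually nonconfusable first-coordinate values.
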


{Using Proposition \ref{prop:recursive} with 
Constructions \ref{code:2} and \ref{code:oneregion}
and the values computed by {\tt MaxCliqueDyn},
we derive lower bounds for $T(n)$ for {$21\le n\le 30$}.
The results are summarized in Table \ref{table:tn}.
In addition to the lower bounds for the code size $T(n)$, 
we also compare the upper bounds in Proposition \ref{prop:irr2-upper} and \eqref{eq:upper}.
Observe that \eqref{eq:upper} is tight up to lengths at most ten and 
the constructions in this paper improve the rates%
\footnote{The rate of a code $\C$ of length $n$ is given by $(\log_3 |\C|)/n$.}
 for Construction \ref{code:jain} by as much as {6.74\%}.

\begin{table}
\caption{Estimates and Exact Values for $T(n)$}
\label{table:tn}
\centering
\begin{tabular}{|c|r|r|r|r||c|r|r|r|r|}
\hline
Length &
{Constr. \ref{code:jain}} &
This paper &
Eq \eqref{eq:upper} &
Prop. \ref{prop:irr2-upper}&
Length &
{Constr. \ref{code:jain}} &
This paper &
Eq \eqref{eq:upper} &
Prop. \ref{prop:irr2-upper} \\
\hline
1 & {\bf 3} & {\bf 3} & {\bf 3} & {\bf 3} & 
 16 & 5985 & {\bf 10641} & 12267 & 15495 \\
 2 & {\bf 9} & {\bf 9} & {\bf 9} & {\bf 9} & 
 17 & 8781 & {\bf 16287} & 19479 & 25077 \\
 3 & {\bf 21} & {\bf 21} & {\bf 21} & {\bf 21} & 
 18 & 12879 & {\bf 25005} & 30957 & 40581 \\
 4 & {\bf 39} & {\bf 39} & {\bf 39} & {\bf 39} & 
 19 & 18885 & {\bf 38223} & 49245 & 65667 \\
 5 & {\bf 69} & {\bf 69} & {\bf 69} & {\bf 69} & 
 20 & 27687 & {\bf 57957} & 78417 & 106257 \\
 6 & 111 & {\bf 117} & {\bf 117} & 117 & 
21 & 40587 & 83619 & 125001 & 171933 \\
 7 & 171 & {\bf 195} & {\bf 195} & 195 & 
22 & 59493 & 116145 & 199467 & 278199 \\
 8 & 261 & {\bf 315} & {\bf 315} & 321 & 
23 & 87201 & 166761 & 318621 & 450141 \\
 9 & 393 & {\bf 495} & {\bf 495} & 525 & 
24 & 127809 & 249159 & 509457 & 728349 \\
 10 & 585 & {\bf 777} & {\bf 777} & 855 & 
25 & 187323 & 375129 & 815361 & 1178499 \\
 11 & 867 & {\bf 1221} & 1227 & 1389 & 
26 & 274545 & 558573 & 1306107 & 1906857 \\
 12 & 1281 & {\bf 1887} & 1941 & 2253 & 
27 & 402375 & 813771 & 2093967 & 3085365 \\
 13 & 1887 & {\bf 2913} & 3075 & 3651 & 
28 & 589719 & 1164309 & 3359685 & 4992231 \\
 14 & 2775 & {\bf 4527} & 4875 & 5913 & 
29 & 864285 & 1675935 & 5394369 & 8077605 \\
 15 & 4077 & {\bf 6969} & 7731 & 9573 & 
30 & 1266681 & 2464419 & 8667075 & 13069845 \\
 
\hline
\end{tabular}
%
%
%
%
%
%
%
%
%
%
%
%
%
%
%
%
%

\vspace{1mm}
Optimal values of $T(n)$ are highlighted in {\bf bold}.
\vspace{-2mm}
\end{table}

\section{Discussion}

We studied the problem of determining the ${\le}k$-confusability of two words.
Combining the results of this paper, we have linear-time algorithms to solve the confusability problem for $k\in\{1,2,3\}$.

It remains open whether there exist efficient algorithms to determine ${\le}4$-confusability. 
One key obstacle is the fact that Proposition \ref{prop:jain2}(iii) does not hold for $k=4$.
In particular, there exists $\bx$ and $\by$ such that $R_{\le 4}(\bx)\ne R_{\le 4}(\by)$, 
but $\bx$ and $\by$ are ${\le}4$-confusable.
An example is provided by Jain {\em et al.}\cite{Jain}, 
where $\bx=012$ and $\by=0121012$ belongs to ${\rm Irr}_{\le4}(3)$, but
$012101212$ is a common descendant of $\bx$ and $\by$.

Another approach is to consider the intersection of the descendant cones 
$D_{\le 4}^*(\bx)\cap D_{\le 4}(\by)$ as formal languages.
Unfortunately, Leupold {\em et al.}\cite{Leupold1} demonstrated that 
while the language $D_{\le 4}^*(\bx)$ is context free for all $\bx$, the language is not regular in general.
However, given two context free languages $L_1$ and $L_2$, 
determining whether $L_1\cap L_2$ is empty is an undecidable problem
(see, for example, Sipser \cite[Exercise 5.32]{Sipser2006}). While this does not imply that the confusability problem is undecidable, 
we nevertheless conjecture that it is undecidable for $k\ge 4$.

\section*{Acknowledgement}

We are grateful for an anonymous reviewer who proposed the definition of label given in Section \ref{sec:codes}.
This definition has led to a more succint presentation and a spectrum of new results, including Proposition \ref{prop:upper} 
and the computation of the size of optimal codes of lengths up to twenty.

\bibliographystyle{IEEEtran}
\bibliography{mybibliography}

\appendices

\section{Computing the Root in Linear Time}\label{app:root}

We formally describe linear-time algorithms to compute all ${\le}k$-roots for a word for $k\in\{2,3\}$.
Given any $\bx\in \Sigma_q^*$, 
since the root set of $\bx$ has size exactly one by Proposition \ref{prop:jain1}, 
it suffices to find one $\br\in{\rm Irr}_{\le k}(q)$ so that 
$\br\xRightarrow[\le k]{*}\bx$.
Lemma \ref{lem:ordering} then implies that we may reorder the duplications and 
assume that the tandem duplications are performed in decreasing lengths.

Therefore, to compute $\br$, we simply remove duplicates in increasingly length.
Formally, given a word of the form $\bu\bv\bv\bw$ where $|\bv|=k'$, we say that $\bv$ is a {\em $k'$-duplicate}.
{\em Removing the $k'$-duplicate} $\bv$ yields the word $\bu\bv\bw$.
Therefore, to compute ${\le}2$-root, we simply remove all 1-duplicates from $\bx$ and 
then all 2-duplicates from the result.
If we want to compute ${\le}3$-root, we remove all 3-duplicates from the ${\le}2$-root.

A formal description of the algorithm in Algorithm \ref{alg:root}.
Since removing all $k'$-duplicates may be performed in linear time, 
the algorithms run in linear-time.

\begin{algorithm}[h!]
\caption{Finding the ${\le}k$-root}
\label{alg:root}
\begin{algorithmic}[1]
\small
\Procedure{Find-${\le}2$-Root}{$\bx$}
\Require $\bx\in\Sigma_q^*$
\Ensure ${\br}$, where $R_{\le 2}(\bx)=\{\br\}$. 
\State $\br\gets \bx$
\State Remove all 1-duplicates in $\br$ using {\sc RemoveDuplicates}$(\br,1)$
\State Remove all 2-duplicates in $\br$ using {\sc RemoveDuplicates}$(\br,2)$
\EndProcedure
\Statex
\Procedure{Find-${\le}3$-Root}{$\bx$}
\Require $\bx\in\Sigma_q^*$
\Ensure ${\br}$, where $R_{\le 2}(\bx)=\{\br\}$. 
\State $\br\gets \bx$
\State Remove all 1-duplicates in $\br$ using {\sc RemoveDuplicates}$(\br,1)$
\State Remove all 2-duplicates in $\br$ using {\sc RemoveDuplicates}$(\br,2)$
\State Remove all 3-duplicates in $\br$ using {\sc RemoveDuplicates}$(\br,3)$
\EndProcedure
\Statex

\Procedure {RemoveDuplicates}{$\br$,$k$}
 \While{$i+2k-1 \le |\br|$} 
 \If{the substring of length $k$ starting at index $i$ is equal to the substring of length $k$ starting at index $i+k$}
  \State{Remove the substring of length $k$ starting at index $i$}
\Else
\State{$i\gets i+1$} 
\EndIf
 \EndWhile
\EndProcedure
\end{algorithmic}
\end{algorithm}

\section{Proofs of Lemmas \ref{lem:reorder} and \ref{lem:distinct}}\label{app:order}

\noindent{\bf Lemma 5.}
Let $k_1<k_2\le 3$ and $\bx\in \Sigma_q^*$.
Suppose that $T_{i_2,k_2}\circ T_{i_1,k_1}(\bx)={\bx'}$ for some $i_1,i_2$.
Then there exist integers $3\ge \ell_1\ge \ell_2\ge \cdots\ge \ell_t$ and $j_1,j_2,\ldots, j_t$
such that
\[
     T_{j_t,\ell_t}\circ T_{j_{t-1},\ell_{t-1}}\circ \cdots \circ T_{j_1,\ell_1}({\bx})=\bx'. 
 \]

\begin{proof}
Consider a sequence of two tandem duplications
$T_{i_2,k_2}\circ T_{i_1,k_1}$ with $k_1< k_2\le 3$.
We replace the sequence of duplications according to the following rules.

    \begin{itemize}
        \item Suppose that $k_1=1$ and $k_{2}=3$.
        \begin{enumerate}[(a)]
            \item If $i_{2}\le i_1 - 2$, then substitute with 
            $ T_{i_{2},3}\circ T_{i_1+3,1}$.
            \item If $i_{2}=i_1 -1$, then substitute with 
            $T_{i_1-1,2}\circ T_{i_{1},1}\circ T_{i_1+3,1}$.
            \item If $i_{2}= i_1$, then substitute with 
            $T_{i_1,2}\circ T_{i_{1},1}\circ T_{i_1+3,1}$.
            \item If $i_{2}\ge i_1 +1$, then substitute with 
            $T_{i_2-1,3} \circ T_{i_{1},1}$.
        \end{enumerate}
        \item Suppose that $k_1=2$ and $k_{2}=3$.
        \begin{enumerate}[(a)]
            \item If $i_{2}\le i_1 - 1$, then substitute with 
            $T_{i_2,3} \circ T_{i_{1}+3,2}$.
            \item If $i_{2}=i_1$, then substitute with 
            $T_{i_{1},2} \circ T_{i_1,2} \circ T_{i_{2},1} $.
            \item If $i_{2}= i_1+1$, then substitute with 
            $T_{i_{1},2}\circ T_{i_1,2}\circ T_{i_1+3,1}$
            \item If $i_{2}\ge i_1 +2$, then substitute with 
            $ T_{i_{2}-2,3}\circ T_{i_1,2}$.
        \end{enumerate}
        \item Suppose that $k_1=1$ and $k_{2}=2$.
        \begin{enumerate}[(a)]
            \item If $i_{2}\le i_1 - 1$, then substitute with 
            $T_{i_{2},2}\circ T_{i_1+2,1}$.
            \item If $i_{2}=i_1$, then substitute with 
            $T_{i_{1},1} \circ T_{i_{1},1} \circ T_{i_{1},1}$.
            \item If $i_{2}\ge i_1+1$, then substitute with 
            $T_{i_{2}-1,2}\circ T_{i_{1},1}$
        \end{enumerate}
    \end{itemize}
Thus, given a sequence of tandem duplications, we may reorder the tandem duplications such that the lengths of the repeats are non-increasing. 
\end{proof}

\noindent{\bf Lemma 6.} Let $k\in \{2,3\}$. Suppose $T_{i,k}(\bx)=\bu\bv\bv\bw$, where $\bx=\bu\bv\bw$, $|\bu|=i$ and $|\bv|=k$.
If the symbols in $\bv$ are not pairwise distinct, then 
there exist integers $k> \ell_1\ge \ell_2$ and $j_1,j_2$
such that
\[
     T_{j_2,\ell_2}\circ T_{j_1,\ell_1}({\bx})=T_{i,k}(\bx). 
 \]

\begin{proof}
When $k=3$, we consider the following three cases.
\begin{itemize}
\item Duplication of $aba$ to $abaaba$ is equivalent to
$T_{2,1}\circ T_{1,2}(aba)=T_{2,1}(ababa)=abaaba$.
\item Duplication of  $aab$ to $aabaab$ is equivalent to
$T_{3,1}\circ T_{1,2}(aab)=T_{3,1}(aabab)=aabaab$.
\item Duplication of  $abb$ to $abbabb$ is equivalent to
$T_{1,1}\circ T_{0,2}(abb)=T_{1,1}(ababb)=abbaab$.
\item Duplication of $aaa$ to $aaaaaa$ is equivalent to
$T_{0,1}\circ T_{0,2}(aaa)=T_{0,1}(aaaaa)=aaaaaa$.
\end{itemize}

When $k=2$, 
we consider the duplication of length two on $aa$ to obtain $aaaa$.
This is equivalent to $T_{0,1}\circ T_{0,1}(aa)=T_{0,1}(aaa)=aaaa$. 
Therefore, if a duplication is performed on a segment whose symbols are not distinct, we may find an equivalent sequence of duplications of strictly shorter lengths.
\end{proof}

\section{Proof of Corollary \ref{cor:main}}\label{app:cor1}

\noindent{\bf Corollary 12.}
Suppose that ${\bx}$ and ${\by}$ are two words such that ${\br}=R_{\le 3}({\bx})=R_{\le3}({\by})$ and
		$\br$ contains at least three distinct symbols.
		Set 
		$\bp={\rm Ext}({\rm Reg}({\br}),{\bx})$ and $\bq={\rm Ext}({\rm Reg}({\br}),{\by})$.
		Suppose further that 
		${\rm Count}({\rm main}({\br}),R_{\le2}(\bp))<{\rm Count}({\rm main}({\br}),R_{\le 2}(\bq))$.
		Then ${\bx}$ and ${\by}$ are confusable if and only if 
		${\rm Count}({\rm rot}({\rm main}({\br})),\bp)>0$ and ${\bx}\setminus{\rm *Pref}({\br},{\bx})$ and ${\by}\setminus{\rm *Pref}({\br},{\by})$ are confusable.

\begin{proof}
By Theorem \ref{thm:main}, it is sufficient to show that ${\rm Count}({\rm rot}({\rm main}({\br})),\bp)>0$ iff $\bp$ and $\bq$ are confusable. Let $R_{\le 2}(\bp)=\bw(abc)^k ab$ and $R_{\le 2}(\bq)=\bw(abc)^j ab$ for some $k < j$.
	
	Suppose there exists at least one distinct triplet in $\bp$, we set ${\bp'}$ to be the result of duplication of the distinct triplet in $\bp$ as many as $j-k$ times and ${\bq'}$ to be ${\bq}$. Then $R_{\le 2}(\bp')=\bw(abc)^j ab=R_{\le 2}(\bq')$, and hence by Theorem~\ref{confusable}, $\bp$ and $\bq$ are confusable.
	
	Suppose on the other hand, $\bp$ and $\bq$ are confusable. By Theorem~\ref{confusable}, there exist ${\bp'}$ and ${\bq'}$ such that $\bp \xRightarrow[3_d]{*} \bp'$ and $\bq \xRightarrow[3_d]{*} \bq'$, where $R_{\le 2}(\bp')=R_{\le 2}(\bq')=\bw(abc)^g ab$ for some $g \ge j >k.$ Then there has to be at least one substring of length three with distinct symbols, or {\em distinct triplet}, in $\bp$. 
	Since $\bp={\rm Ext}({\rm Reg}({\br}),{\bx})$, then by definition, 
	the only distinct triplet possible is from the set ${\rm rot}({\rm main}({\bf r}))$.
	Therefore, ${\rm Count}({\rm rot}({\rm main}({\bf r})),\bp)>0$.
	\end{proof}

\section{On the Sum of Lengths of All Prefixes}\label{app:prefixes}

Recall in Section \ref{sec:implementation}, the running time of Algorithm \ref{alg:confuse}
is linear in the sum of the lengths of all prefixes that are computed in all recursive calls.
Therefore, if this sum is linear in the lengths of the original words, Algorithm \ref{alg:confuse} runs in linear time.
Specifically, we establish the following proposition.

\begin{proposition}
Let $\bx\in\Sigma_q^m$. Let $\bp_1,\bp_2,\ldots, \bp_s$ be the prefixes computed in Line 13 in Algorithm \ref{alg:confuse}. Then $\sum_{i=1}^s|\bp_i|\le 3m$.
\end{proposition}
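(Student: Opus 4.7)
The plan is to decompose each prefix $\bp_i$ structurally and then relate its length to the subsequent prefix via a tight overlap bound.

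First, I will establish that each $\bp_i$ admits a decomposition $\bp_i = {\rm *Pref}(\br^{(i)},\bx^{(i)}) \cdot s_i$, where $s_i = a_i b_i^{k_i}$ for some $k_i \ge 1$, with $a_i,b_i$ the last two symbols of the decomposition $\bw_i(a_ib_ic_i)^{\ell_i}a_ib_i$ of ${\rm Reg}(\br^{(i)})$ supplied by Lemma \ref{lem:reg}. This follows from the definition of ${\rm *Pref}$ and the fact that the $\le 2$-root of $\bp_i$ has the form $\bw_i(a_ib_ic_i)^{m_i}a_ib_i$, so that after the last occurrence of $a_i$ in $\bp_i$ only $b_i$'s can appear. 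Consequently $|\bp_i| = |{\rm *Pref}_i| + k_i + 1$. Since the ${\rm *Pref}_i$'s tile disjoint consecutive substrings of $\bx$, one has $\sum_{i=1}^s |{\rm *Pref}_i| \le m$, and the task reduces to bounding $\sum_{i=1}^s (k_i+1) \le 2m$.

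Second, I will prove the key structural inequality $|{\rm *Pref}_{i+1}| \ge k_i$ for every $i<s$. By construction $\bp_{i+1}$ begins with the overlap $s_i = a_ib_i^{k_i}$, and $\br^{(i+1)} = \br^{(i)}\setminus \bw_i(a_ib_ic_i)^{\ell_i}$ begins with $a_ib_i$. Using Lemma \ref{lem:reg}, one enumerates the few possible forms of ${\rm Reg}(\br^{(i+1)})$ (lengths three to six, depending on $r_3^{(i+1)},\ldots,r_6^{(i+1)}$) together with the corresponding triple $(a_{i+1},b_{i+1},c_{i+1})$: $a_{i+1}$ is either $a_i$, $b_i$, or a symbol first appearing in $\br^{(i+1)}$ at position three or four. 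For each case one checks that the last occurrence of $a_{i+1}$ in $\bp_{i+1}$ lies at position at least $k_i+1$. The essential constraint is Lemma \ref{lem:reg}'s assertion that the symbol in $\br^{(i)}$ following ${\rm Reg}(\br^{(i)})$ is not $c_i$; for example, when $a_{i+1}=a_i$, this forces either $\ell'=1$ or $\bw'$ to contain a second occurrence of $a_i$, which after the expansion by $b_i^{k_i}$ is pushed past the initial overlap.

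Third, I combine. From the key claim $k_i+1 \le |{\rm *Pref}_{i+1}|+1$ for $i<s$, and from $k_s+1 \le |\bx^{(s+1)}|$ (since $s_s$ is a prefix of $\bx^{(s+1)}$),
\[
\sum_{i=1}^s |\bp_i| \;\le\; \sum_{i=1}^s |{\rm *Pref}_i| \;+\; \sum_{j=2}^s |{\rm *Pref}_j| \;+\; (s-1) \;+\; |\bx^{(s+1)}|.
\]
The identity $\sum_{i=1}^s |{\rm *Pref}_i| + |\bx^{(s+1)}| = m$, together with the observation that every $|{\rm *Pref}_i| \ge 1$ (whence $s \le m$), bounds the right-hand side by $3m$.

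The main obstacle is the case analysis underlying $|{\rm *Pref}_{i+1}| \ge k_i$, which requires carefully tracking how the triple $(a_{i+1},b_{i+1},c_{i+1})$ and the exponent $\ell'$ of the decomposition of ${\rm Reg}(\br^{(i+1)})$ depend on the shape of $\br^{(i+1)}$, notably the possibility that $a_i$ and $b_i$ swap roles and that $c_i$ is replaced by a fresh symbol.
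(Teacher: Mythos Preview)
Your argument is correct, and the structural fact you isolate—namely $|{\rm *Pref}_{i+1}| \ge k_i$—is precisely what drives the paper's proof too. However, the paper packages it more simply via symbol-by-symbol double counting rather than your telescoping bound: it shows directly that each individual symbol $x_j$ of $\bx$ belongs to at most three consecutive prefixes $\bp_i,\bp_{i+1},\bp_{i+2}$. If $x_j$ first appears in $\bp_i$, then either $x_j \in {\rm *Pref}_i$ (so it is removed and never reappears), or $x_j$ lies in the overlap $a_i b_i^{k_i}$. In the latter case, if $x_j$ is the initial $a_i$ or any $b_i$ except the last, then $x_j \in {\rm *Pref}_{i+1}$ and so appears in only two prefixes; if $x_j$ is the last $b_i$, it may survive as the first symbol of $\bx^{(i+2)}$ and is then removed in ${\rm *Pref}_{i+2}$, for a total of three appearances. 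Summing over $j$ gives $\sum_i |\bp_i| \le 3m$ immediately, with no algebraic combination step. This sidesteps the explicit enumeration of the six possible shapes of ${\rm Reg}(\br^{(i+1)})$ that you flag as the main obstacle: the paper simply asserts that ``either $ab^{k_i-1}$ or $ab^{k_i}$ is removed in the next call,'' which is exactly your inequality $|{\rm *Pref}_{i+1}| \ge k_i$ stated qualitatively. Your route works and makes the inequality explicit, but the double-counting perspective is shorter and dispenses with the bookkeeping of your third step.
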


\begin{proof}
Let $\bx=x_1x_2\cdots x_m$. 
To establish the proposition, it suffices to show that each symbol $x_j$ with $1\le j\le m$ appears%
\footnote{In this proof, when we refer to a symbol $x_j$, we refer specifically to the symbol at index $j$.
More formally, we may rewrite the word $\bx$ as $(x_1,1)(x_2,2)\cdots (x_m,m)$.}
in at most three prefixes $\bp_i, \bp_{i+1}, \bp_{i+2}$ for some $1\le i\le s-2$.

Suppose that $\bp_i$ is the first prefix where $x_j$ appears in and 
let $\ba_i$ be the suffix that is called in the next recursive call.
If $x_j$ is not removed, then $x_j$ must belong the overlapping region of $\bp_i$ and $\ba_i$
(see Fig. \ref{fig:prefixes}).
Suppose that $\br_i$ is the current root and ${\rm main}(\br_i)=abc$. 
Then the overlapping region is of the form $ab^\ell$ for some $\ell\ge 1$ and we have the following cases.
\begin{itemize}
\setlength\itemsep{2mm}
\item \underline{$x_j=a$}. Then $x_j$ is necessarily the first symbol of $\ba_i$ and is removed in the next recursive call.
In other words, $x_j$ appears only in two prefixes.

\item \underline{$x_j=b$ for some $b$ in the overlapping region}.
In the next recursive call or $(i+1)$th call, either $ab^{\ell-1}$ or $ab^\ell$ is removed.
If $x_j$ is not the last $b$ in the overlapping region, $x_j$ is removed and hence, appears only in two prefixes.
If $x_j$ is the last $b$, then $x_j$ is the first symbol in the $(i+2)$th call and
is removed in this call. Hence, $x_j$ appears in three prefixes in this case. \qedhere
\end{itemize}
\end{proof}

\section{Proof of Proposition \ref{prop:upper}}\label{app:upper}

Let $\bx$ be a word of length $n$ whose root is $\br$.
Suppose that $\br$ has length $i$ and $m$ regions and 
we set ${\rm Label}(\bx)=(\br,(c_1,\delta_1),(c_2,\delta_2),\ldots,(c_m,\delta_m))$.
To prove Proposition \ref{prop:upper}, we first show certain properties of the label of $\bx$.

\begin{lemma}We have that
\begin{equation}\label{numsol}
i+3(c_1+c_2+\cdots+c_m)-m\le n.
\end{equation}
\end{lemma}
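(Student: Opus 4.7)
The plan is to bound $n$ from below by $|R_{\le 2}(\bx)|$ and then to compute $|R_{\le 2}(\bx)|$ in terms of the label entries $c_1,\ldots,c_m$.

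First, because $\bx$ is obtained from $R_{\le 2}(\bx)$ by a sequence of length-$1$ and length-$2$ tandem duplications, each of which strictly increases length, we have $n\ge |R_{\le 2}(\bx)|$. Moreover, Lemma~\ref{lem:ordering} lets me reorder any duplication sequence $\br \xRightarrow[\le 3]{*} \bx$ so that all length-$3$ duplications of distinct triplets are performed first and yield precisely $R_{\le 2}(\bx)$, before any length-$\le 2$ duplications. Hence if $D$ denotes the total number of length-$3$ distinct-triplet duplications used, then $|R_{\le 2}(\bx)| = i + 3D$.

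The heart of the argument is to identify $D$ with $\sum_{j=1}^m (c_j-1)$. Using the region decomposition arising from the recursive construction of the label, so that each $\br_j$ and its extended region $\bp_j$ occupy well-defined prefixes of $\bx_j$, I would first argue that every length-$3$ distinct-triplet duplication is confined to a single region: the overlap between $\bp_j$ and the rest of $\bx_j$ has the form $a_j b_j^{+}$, which uses only two distinct symbols and therefore admits no duplication of distinct triplets across the boundary. This yields $D=\sum_{j=1}^m d_j$, where $d_j$ counts the distinct-triplet length-$3$ duplications performed inside region $j$. A case analysis on the six possible shapes of ${\rm Reg}(\br_j)$ that are enumerated in the definitions preceding Lemma~\ref{lem:reg} then shows that $R_{\le 2}(\bp_j)$ is always of the form $\bw_j(abc)^{\alpha_j}ab$ for the distinct symbols $a,b,c$ associated with that region, where $\alpha_j$ starts at $\ell_j\in\{0,1\}$ and is incremented by $1$ for each of the $d_j$ duplications, and that the number of occurrences of ${\rm main}(\br_j)$ in this word is always exactly $d_j+1$; equivalently, $c_j=d_j+1$.

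Combining the two identities gives $|R_{\le 2}(\bx)|=i+3(c_1+\cdots+c_m)-3m$, from which the claimed inequality follows via $n\ge |R_{\le 2}(\bx)|$. The main obstacle is the case analysis supporting $c_j=d_j+1$: one must verify (i) that length-$3$ distinct-triplet duplications genuinely cannot straddle region boundaries, using the structure of the overlap, and (ii) that within a region each such duplication increments the count of ${\rm main}(\br_j)$ by exactly one. Part (ii) is delicate because ${\rm main}(\br_j)$ and the triplet $abc$ appearing in the expression for ${\rm Reg}(\br_j)$ are in general distinct cyclic rotations of one another, so the six cases must be treated separately; the verification hinges on Lemma~\ref{lem:reg}, in particular on the fact that the character of $\br_j$ immediately following ${\rm Reg}(\br_j)$ is never $c$, which prevents any spurious extra occurrence of ${\rm main}(\br_j)$ at the region boundary.
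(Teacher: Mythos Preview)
Your approach is essentially the paper's: both argue $n \ge |R_{\le 2}(\bx)|$, invoke Lemma~\ref{lem:ordering} to realise $R_{\le 2}(\bx)$ as a $3_d$-descendant of $\br$, and count the required length-$3$ duplications region by region as $\sum_{j}(c_j-1)$. Your treatment is in fact more careful than the paper's, which simply asserts ``we have to duplicate the distinct triplet in the $j$th region of $\br$ by $c_j-1$ times'' without discussing the boundary issue you raise.

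There is, however, a discrepancy you should flag rather than gloss over. Your computation yields $|R_{\le 2}(\bx)| = i + 3\sum_j c_j - 3m$, with constant $-3m$, not the $-m$ appearing in the displayed statement. The inequality with $-m$ is in fact false: for $\br = 01210$ and $\bx = 01112110$ one has $n=8$, $i=5$, $m=2$, $c_1=c_2=1$, and $5+3\cdot 2 - 2 = 9 \not\le 8$. The paper's proof contains the same arithmetic slip, writing $\sum_{j=1}^{m}3(c_j-1) = i + 3(c_1+\cdots+c_m)-m$ where the right-hand side should end in $-3m$. That the intended inequality is the $-3m$ version is confirmed by the proof of Proposition~\ref{prop:upper}, where the count of positive integer solutions is stated as $\binom{\lfloor (n-i)/3\rfloor + m}{m}$; this is exactly the number of positive solutions to $i + 3\sum_j c_j - 3m \le n$, not to the $-m$ variant. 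So your final sentence ``from which the claimed inequality follows'' is not literally correct; what you have proved is the (true) inequality with $-3m$, and you should note that the statement as printed contains a typo.
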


\begin{proof}
Since $\br \xRightarrow[{\le}3]{*} \bx$, then by Theorem \ref{confusable}, there exists $\bx'$ such that
$\br \xRightarrow[3_d]{*}  \bx'$, and $\bx' = R_{{\le}2}(\bx)$. Let $1 \leq j \leq m$. 
Since there are $c_j$ distinct triplets in the $j$th region of $R_{{\le}2} (\bx)$, 
we have to duplicate the distinct triplet in the $j$th region of $r$ by $c_j -1$ times. 
Therefore we have $|\bx'| = |\br| + \sum_{j=1}^{m}{3 (c_j -1)} = i + 3(c_1 + c_2 + \cdots c_m) - m$. 
Note that since $\bx'=R_{{\le}2}(\bx)$, we have $|\bx| \geq |\bx'|$ and hence it yields \eqref{numsol}.
%
\end{proof}

\begin{lemma}We have that $\delta_1=\delta_2=\cdots=\delta_m=+$, whenever
\begin{equation}\label{allplus}
i+3(c_1+c_2+\cdots+c_m)-m=n.
\end{equation}
\end{lemma}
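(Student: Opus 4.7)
The plan is to use the previous lemma (which shows $|R_{\le 2}(\bx)| = i + 3(c_1 + \cdots + c_m) - m$) and to observe that the hypothesis forces $\bx$ itself to already be $\le 2$-irreducible. From there, each prefix $\bp_j$ must contain a rotation of $\bt_j$, giving $\delta_j = +$.

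First I would note that the preceding lemma, applied to $\bx$, gives $|R_{\le 2}(\bx)| = i + 3(c_1 + \cdots + c_m) - m$. So the assumption \eqref{allplus} is exactly the statement $|R_{\le 2}(\bx)| = n = |\bx|$. Since the $\le 2$-root is obtained from $\bx$ by successively removing $1$- and $2$-duplicates (each of which strictly shortens the word), the equality of lengths forces $\bx = R_{\le 2}(\bx)$, i.e.\ $\bx$ itself is $\le 2$-irreducible.

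Next I would argue that every $\bp_j$ inherits this property. By construction, $\bx_{j+1}$ is obtained from $\bx_j$ by stripping a prefix, so $\bx_j$ is a suffix of $\bx$, and $\bp_j = {\rm Ext}({\rm Reg}(\br_j), \bx_j)$ is a prefix of this suffix. A factor of a $\le 2$-irreducible word is still $\le 2$-irreducible (any $1$- or $2$-duplicate inside a factor would occur inside $\bx$ as well). Hence $\bp_j = R_{\le 2}(\bp_j)$, and so
\[ c_j = {\rm Count}(\bt_j, R_{\le 2}(\bp_j)) = {\rm Count}(\bt_j, \bp_j). \]

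Finally, I would check that $c_j \ge 1$ by inspecting the definitions of ${\rm main}(\br_j)$ and ${\rm Reg}(\br_j)$: in every case the region contains $\bt_j = abc$ as a substring, and ${\rm Reg}(\br_j)$ is by definition a prefix of $\bp_j$, so $\bt_j$ itself is a substring of $\bp_j$. Therefore
\[ {\rm Count}({\rm rot}(\bt_j), \bp_j) \ge {\rm Count}(\bt_j, \bp_j) = c_j \ge 1, \]
which means $\delta_j = +$ for every $j$, as claimed.

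I do not expect a serious obstacle here — everything follows once one notices that \eqref{allplus} is the length-equality case of the previous lemma. The only point requiring a moment of care is that $\bp_j$ remains $\le 2$-irreducible, which just uses the closure of $\le 2$-irreducibility under taking factors (or equivalently, that duplicates in a factor lift to duplicates in $\bx$).
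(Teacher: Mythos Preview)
Your proof is correct and takes essentially the same route as the paper's: both observe that \eqref{allplus} forces $|\bx|=|R_{\le 2}(\bx)|$, hence $\bx$ is $\le 2$-irreducible, and therefore each region still contains a distinct triplet, giving $\delta_j=+$. The paper compresses this last step into the single sentence ``a distinct triplet must remain in each of the regions,'' while you spell it out via factors.

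One small correction: the claim that ${\rm Reg}(\br_j)$ is a prefix of $\bp_j$ is not true ``by definition'' in general (take $\br_j=012$, $\bx_j=0012$: then $\bp_j=0012$ and $012$ is not a prefix). It \emph{is} true here, but only because you have already shown $\bp_j$ is $\le 2$-irreducible: then ${\rm Reg}(\br_j)\xRightarrow[3_d]{*}\bp_j$ by Lemma~\ref{lem:ordering}, so by Lemma~\ref{lem:reg} we have $\bp_j=\bw(abc)^mab$ with $m\ge\ell$, which indeed has ${\rm Reg}(\br_j)=\bw(abc)^\ell ab$ as a prefix. With that justification substituted, your argument is complete.
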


\begin{proof}
Since we know that $i + 3(c_1 + c_2 + \cdots c_m) - m$ is the length of $\bx'= R_{{\le}2} (\bx)$,  
and the equality holds, that means there is no tandem duplication of length at most two in $\bx$. 
 Hence, a distinct triplet must remain in each of the regions and 
 so, $\delta_j=+$ for all $1\le j\le m$.
\end{proof}

Finally, we complete the proof of Proposition \ref{prop:upper}.

\noindent{\bf Proposition 15.}
Let $i\le n$. Suppose that $\br\in {\rm Irr}_{{\le}3}(i,3)$ has $m$ regions. 
Then 
\[ T(n,\br)\le U(n,i,m)\triangleq
\begin{cases}
\binom{(n-i)/3+m}{m}-\binom{(n-i)/3+m-1}{m-1}+1, & \mbox{if 3 divides $n-i$},\\
\binom{\floor{(n-i)/3}+m}{m}, &\mbox{otherwise.}
\end{cases}
\]

\begin{proof}
Let $\C$ be an $(n\le 3;\br)$-TD code.
For $\bx\in \C$, set ${\rm Label}(\bx)=(\br,(c_1,\delta_1),(c_2,\delta_2),\ldots,(c_m,\delta_m))$
and so, $(c_1,c_2,\ldots, c_m)$ is an integer solution to \eqref{numsol} whose entries are all positive.
Corollary \ref{cor:label}(i) implies that these integer solutions are distinct for different $\bx$ chosen from $\C$.
Hence, the number of integer solutions to \eqref{numsol} is an upper bound to the size of $\C$.
This number is wellknown (see Heubach and Mansour \cite{Heubach:2009}) and is given by $\binom{\floor{(n-i)/3}+m}{m}$.

When $n-i$ is divisible by three, we improve this upper bound. 
Observe that the integer solutions to \eqref{allplus} is a proper subset of \eqref{numsol}.
Corollary \ref{cor:label}(ii) then implies that there is at most one $\bx$ in $\C$ whose label yields a positive integer solution to \eqref{allplus}. 
Since the number of positive integer solutions to \eqref{allplus} is $\binom{(n-i)/3+m-1}{m-1}$, we obtain the desired upper bound.
\end{proof}

\section{Proof of Code Constructions}\label{app:codes}

We provide proofs of Construction \ref{code:oneregion} and Proposition \ref{prop:recursive}.
\vspace{2mm}

\noindent\textbf{Construction \ref{code:oneregion}.}
For $\br\in R$ and $\ell\ge 1$, define the words $\bx(\br,\ell)$ and $\bz(\br,\ell)$ with the following rule.
\vspace{1mm}

\begin{center}
\begin{tabular}{|l|l|l||l|l|l|}
\hline
$\br$ & $\bx(\br,\ell)$ & $\bz(\br,\ell)$ &
$\br$ & $\bx(\br,\ell)$ & $\bz(\br,\ell)$ \\ \hline
$012$ & $0(112200)^{\ell-1} 112$ & $(012)^\ell$ &
$1012$ & $10(112200)^{\ell-1} 112$ & $1(012)^\ell$ \\ \hline
$0120$ & $0(112200)^{\ell-1} 11220$ & $(012)^\ell0$ &
$10120$ & $10(112200)^{\ell-1} 11220$ & $1(012)^\ell0$ \\ \hline
$01201$ & $0(112200)^{\ell-1} 1122001$ & $(012)^\ell01$ &
$101201$ & $10(112200)^{\ell-1} 1122001$ & $1(012)^\ell01$ \\ \hline
$0121$ & $0(112200)^{\ell-1} 1121$ & $(012)^\ell1$ &
$10121$ & $10(112200)^{\ell-1} 1121$ & $1(012)^\ell1$ \\ \hline
$01202$ & $0(112200)^{\ell-1} 112202$ & $(012)^\ell02$ &
$101202$ & $10(112200)^{\ell-1} 112202$ & $1(012)^\ell02$ \\ \hline
$012010$ & $0(112200)^{\ell-1} 11220010$ & $(012)^\ell010$ &
$1012010$ & $10(112200)^{\ell-1} 11220010$ & $1(012)^\ell010$ \\ \hline
\end{tabular}
\end{center}
%
For $\br\in R$ and $n\ge |\bx(\br,2)|$, set $\ell_z=\floor{(n-|\br|)/3}+1$ and
\[{\cal C}_{\br}(n)
\triangleq \left\{\xi_{n-|\bx(\br,\ell)|}(\bx(\br,\ell)):|\bx(\br,\ell)| \le n\right\}
\cup \left\{\xi_{n-|\bz(\br,\ell_z)|} (\bz(\br,\ell_z)) \right\}.
\]
Then ${\cal C}_{\br}(n)$ is an $(n,{\le}3;\br)$-TD code. 
Therefore, if we set $n_2=|\bx(r,2)|$,
we have that
\[ T(\br,n) = 
\begin{cases} 
\floor{\frac{n-n_2}{6}}+3 &\mbox{if $n\ge n_2$},\\
2, &\mbox{if $|\br|+3\le n< n_2$},\\
1, &\mbox{otherwise.}
\end{cases}
\]

\begin{proof}
Observe that for all $\ell\ge 1$, we have 
${\rm Label}(\bx(\br,\ell))=(\br,(\ell,-))$ and ${\rm Label}(\bz(\br,\ell))=(\br,(\ell,+))$.
Furthermore, the length of $\bx(\br,\ell)$ is at least $6(\ell-1)+|\br|+1$. 

Let ${\cal X}=\left\{\xi_{n-|\bx(\br,\ell)|}(\bx(\br,\ell)):|\bx(\br,\ell)| \le n\right\}$ and set 
$M=|{\cal X}|=\max\{\ell: \bx(\br,\ell))\le n \}$.
Hence, we have $6(M-1)+|\br|+1\le n$. 
So, $\ell_z=\floor{(n-|\br|)/3}+1\ge \floor{(6M-5)/3}+1\ge M+1$ for $M\ge 2$.

Hence, for any word $\bx$ in ${\cal X}$, 
we have that ${\rm Label}(\bx)=(\br,(\ell',-))$ with $\ell'\le M<\ell_z$.
Therefore, Proposition \ref{prop:label} implies that $\bx$ and $\bz(\br,\ell_z)$ are not confusable.
Also, Corollary \ref{cor:label}(ii) implies that $\bx$ and $\bx'$ are not confusable for $\bx, \bx'\in {\cal X}$.
Therefore, ${\cal C}_{\br}(n)$ is an $(n,{\le}3;\br)$-TD code. 

To demonstrate optimality, we observe that for $\by\in D_{{\le}3}^*(\br)$, if ${\rm Label}(\by)=(\ell,-)$, 
then $|\by|\ge |\bx(\br,\ell)|$. Similarly, if ${\rm Label}(\by)=(\ell,+)$, then $|\by|\ge |\bz(\br,\ell)|$.
Suppose that there is an $(n,{\le}3;\br)$-TD code ${\cal C'}$ with size $M+2$.
We know from Corollary \ref{cor:label}(ii) that there can be no two codewords whose label $\delta_1$ are $+$.
Furthermore, by Corollary \ref{cor:label}(i),
 there exists an $\by\in {\cal C'}$ whose label is $(c_1,-)$ with $c_1\ge M+1$.
This implies that $|\by|\ge |\bx(\br,M+1)|>n$, a contradiction.

Finally, the values of $T(n,\br)$ follow from straightforward computations.
\end{proof}

\noindent\textbf{Proposition \ref{prop:recursive}.}
Let $\br=r_1r_2\cdots r_i\in{\rm Irr}_{\le 3}(i,3)$. Then the following holds.
\[
T(n,\br)\ge
\begin{cases}
T(n-1,\br\setminus r_1), &
\mbox{if $r_1= r_3$},\\

\max\{2T(n-4,\br\setminus r_1), 3T(n-8,\br\setminus r_1)\}, & 
\mbox{if $r_1\ne r_3$, $r_1\ne r_4$},\\

\max\{2T(n-5,\br\setminus r_1r_2), 3T(n-10,\br\setminus r_1r_2)\}, &
\mbox{if $r_1\ne r_3$, $r_1=r_4$, $r_2\ne r_5$},\\

\max\{2T(n-6,\br\setminus r_1r_2r_3), 3T(n-12,\br\setminus r_1r_2r_3)\}, &
\mbox{if $r_1\ne r_3$, $r_1=r_4$, $r_2= r_5$}.\\
\end{cases}
\]
Furthermore, $T(n,\br)\ge T(n-1,\br)$ and $T(n,\br)=T(n,\br^R)$,
where $\bz^R$ denotes the reverse of word $\bz$. 

\begin{proof}
First consider $r_1=r_3$. 
Suppose that ${\cal D}$ is an $(n-1,{\le}3;\br \setminus r_1)$-TD code.
To construct a code of length $n$, we simply prepend the prefix $r_1$ to all words in ${\cal D}$.
For convenience, given a set of words ${\cal X}$ and a word $\bp$, we use $\bp{\cal X}$
to denote the set $\{\bp\bx: \bx\in \X\}$. When $\bp=p$ is of length one, we simply write $p\X$.
Using this notation, we set $\C=r_1\D$.
We then apply Proposition \ref{prop:label} and verify that 
${\cal C}$ is an $(n,{\le}3;\br)$-TD code whose size is given by $|{\cal D}|$.

Next, consider $r_1\ne r_3$ and $r_1\ne r_4$. Set $\br'=\br \setminus r_1$ and
suppose that $\D$ is an $(n-4,{\le}3;\br')$-TD code.
Let $\D_1=r_1r_2r_2r_2\D$ and $\D_2=r_1r_2r_3r_1\D$, and 
so, $\D_1\cup \D_2\subseteq D^*_{{\le}3}(\br)\cap \Sigma^n_3$.

For $\bx\in\D$, let ${\rm Label}(\bx)=(\br',(c_1,\delta_1),\ldots,(c_m,\delta_m))$.
Then we have that 
\begin{align*}
{\rm Label}(r_1r_2r_2r_2\bx) &=(\br,(1,-),(c_1,\delta_1),\ldots,(c_m,\delta_m)),\\
{\rm Label}(r_1r_2r_3r_1\bx) &=(\br,(2,+),(c_1,\delta_1),\ldots,(c_m,\delta_m)).
\end{align*}
Proposition \ref{prop:label} implies that $r_1r_2r_2r_2\bx$ and $r_1r_2r_3r_1\bx$ are not confusable.
We can similarly check that any pair of distinct words in $\D_1\cup \D_2$ are not confusable.

For the remaining cases, we choose the short code ${\cal D}$ and prepend $\D$ according to the rules below.

\begin{center}
\begin{tabular}{|l|l|l|}
\hline
Conditions for $\br$ & Short Code ${\cal D}$& $(n,{\le}3;\br)$-TD code \\\hline

$r_1= r_3$ &
$(n-1,{\le}3;\br\setminus r_1)$-TD code &
$r_1$\\ \hline

$r_1\ne r_3, r_1\ne r_4$ &
$(n-4,{\le}3;\br\setminus r_1)$-TD code &
\hspace{3mm}$r_1r_2r_2r_2\D$\\
&& $\cup~r_1r_2r_3r_1\D$\\ \hline

$r_1\ne r_3, r_1\ne r_4$ &
$(n-8,{\le}3;\br\setminus r_1)$-TD code &
\hspace{3mm}$r_1r_2r_2r_2r_2r_2r_2r_2\D$\\
&& $\cup~r_1r_2r_2r_3r_3r_1r_1r_2\D$\\ 
&& $\cup~r_1r_2r_3r_1r_2r_3r_1r_2\D$\\ 
\hline

$r_1\ne r_3, r_1= r_4, r_2\ne r_5$ &
$(n-5,{\le}3;\br\setminus r_1r_2)$-TD code &
\hspace{3mm}$r_1r_2r_2r_2r_3\D$\\
&& $\cup~r_1r_2r_3r_1r_2\D$\\ \hline

$r_1\ne r_3, r_1= r_4, r_2\ne r_5$ &
$(n-10,{\le}3;\br\setminus r_1r_2)$-TD code &
\hspace{3mm}$r_1r_2r_2r_3r_3r_3r_3r_3r_3r_3\D$\\
&& $\cup~r_1r_2r_2r_3r_3r_1r_1r_2r_2r_3\D$\\ 
&& $\cup~r_1r_2r_3r_1r_2r_3r_1r_2r_3r_3\D$\\ 
\hline

$r_1\ne r_3, r_1= r_4, r_2= r_5$ &
$(n-6,{\le}3;\br\setminus r_1r_2r_3)$-TD code &
\hspace{3mm}$r_1r_2r_2r_3r_3r_1\D$\\
&& $\cup~r_1r_2r_3r_1r_2r_3\D$\\ \hline

$r_1\ne r_3, r_1= r_4, r_2= r_5$ &
$(n-12,{\le}3;\br\setminus r_1r_2r_3)$-TD code &
\hspace{3mm}$r_1r_2r_2r_3r_3r_1r_1r_1r_1r_1r_1r_1\D$\\
&& $\cup~r_1r_2r_2r_3r_3r_1r_1r_2r_2r_3r_3r_1\D$\\ 
&& $\cup~r_1r_2r_3r_1r_2r_3r_1r_2r_3r_1r_1r_1\D$\\ 
\hline

\end{tabular}
\end{center}

To show that $T(n,\br)\ge T(n-1,\br)$, let ${\cal D}$ be an $(n-1,{\le}3;\br)$-TD code. Then ${\cal C}=\{\xi_1(\bx):\bx\in {\cal D}\}$ is an $(n,{\le}3;\br)$-TD code.

To show that $T(n,\br)= T(n,\br^R)$, let ${\cal D}$ be an $(n,{\le}3;\br)$-TD code. Then ${\cal C}=\{\bx^R:\bx\in {\cal D}\}$ is an $(n,{\le}3;\br^R)$-TD code.
\end{proof}

\section{Number of Irreducible Words with Certain Number of Regions}

Recall that $I(i,m)$ denote the number of irreducible words  in ${\rm Irr}_{{\le}3}(i,3)$ with exactly $m$ regions. 
In this appendix, we derive a recursive formula for $I(i,m)$ that allows us to efficiently compute \eqref{eq:upper}.

Let ${\rm Irr}(aba, i,m)$ and ${\rm Irr}(abc, i,m)$ denote the set of irreducible words of length $i$ with exactly $m$ regions
that have two and three distinct symbols, respectively, in their prefixes of length three. 
Let $I(aba,i,m)$ and $I(abc,i,m)$ denote the sizes of ${\rm Irr}(aba, i,m)$ and ${\rm Irr}(abc, i,m)$, respectively.
Without loss of generality, given $\bx$ in ${\rm Irr}(aba, i,m)$ or ${\rm Irr}(abc, i,m)$, 
we assume that 


We consider the maps,
\begin{align*}
\Phi_1: & {\rm Irr}(aba, i,m) \to {\rm Irr}(abc, i-1,m),\\
\Phi_2: & {\rm Irr}(abc, i-1,m)\to {\rm Irr}(aba, i,m) ,\\
\Psi_1: & {\rm Irr}(abc, i,m) \to {\rm Irr}(aba, i-1,m-1)\cup {\rm Irr}(aba, i-2,m-1)\cup {\rm Irr}(aba, i-3,m-1),\\
\Psi_2: & {\rm Irr}(aba, i-1,m-1)\cup {\rm Irr}(aba, i-2,m-1)\cup {\rm Irr}(aba, i-3,m-1)\to {\rm Irr}(abc, i,m),
\end{align*}
defined via the following rules. In what follows, we set $\br=r_1r_2\cdots r_{|\br|}$ and for distinct elements $r_1,r_2$, set $r^*$ to be the unique symbol distinct from $r_1$ and $r_2$.
\begin{align*}
\Phi_1(\br) & = \br\setminus r_1,&
\Phi_2(\br) & = r_2\br,\\
\Psi_1(\br)&=
\begin{cases}
\br\setminus r_1, & \mbox{if $r_1\ne r_4$},\\
\br\setminus r_1r_2, & \mbox{if $r_1=r_4, r_2\ne r_5$},\\
\br\setminus r_1r_2r_3, & \mbox{if $r_1=r_4, r_2=r_5$},
\end{cases}
&
\Psi_2(\br)&=
\begin{cases}
r^*\br, & \mbox{if $|\br|=i-1$},\\
r_2r^*\br, & \mbox{if $|\br|=i-2$},\\
r_1r_2r^*\br, & \mbox{if $|\br|=i-3$}.
\end{cases}
\end{align*} 

Then we check that the maps $\Phi_1$, $\Phi_2$, $\Psi_1,$ and $\Psi_2$ are well-defined
and $\Phi_1\circ\Phi_2$, $\Phi_2\circ\Phi_1$, $\Phi_1\circ\Phi_2$, and $\Phi_2\circ\Phi_1$ are 
identity maps on their respective domains. Therefore, all four maps are bijections and we establish the following recursion.
For $m\ge 0$ and $i\ge 3$,

{ 
\begin{align*}
I(aba,i,m)&=
\begin{cases}
I(abc,i-1,m), &\mbox{$i>3$},\\
0, &\mbox{if $i=3, m>0$},\\
6, &\mbox{if $i=3, m=0$}.
\end{cases}\\
I(abc,i,m)&=
\begin{cases}
I(aba,i-1,m-1)+I(aba,i-2,m-1)+I(aba,i-3,m-1), &\mbox{if $i\ge 6,m\ge 1$},\\
0, &\mbox{if $i=5$, $m>2$},\\
6, &\mbox{if $i=5$, $m=2$},\\
12, &\mbox{if $i=5$, $m=1$},\\
0, &\mbox{if $i=4$, $m>1$},\\
12, &\mbox{if $i=4$, $m=1$},\\
0, &\mbox{if $i=3$, $m>1$},\\
6, &\mbox{if $i=3$, $m= 1$},\\
0, &\mbox{if $m= 0$},\\
\end{cases}
\end{align*}
}

Finally, to compute $I(i,m)$, we have that $I(i,m)=I(aba,i,m)+I(abc,i,m)$.

\end{document}